\def\EquationsBySection{\def\theequation
	{\thesection.\arabic{equation}}
	\@addtoreset{equation}{section}}
\def \a{{\alpha}}
\def \G{{\Gamma}}
\def \d{{\mathrm{d}}}
\newtheorem{thm}{Theorem}[section]
\newtheorem{lem}{Lemma}[section]
\newtheorem{rem}{Remark}[section]
\theoremstyle{definition}
\numberwithin{equation}{section}
\newcommand{\be}{\begin{equation}}
	\newcommand{\ee}{\end{equation}}
\newcommand{\bes}{\begin{equation*}}
	\newcommand{\ees}{\end{equation*}}
\renewcommand{\theequation}{\thesection.\arabic{equation}}
\newcommand{\bean}{\begin{eqnarray*}}
	\newcommand{\eean}{\end{eqnarray*}}
\newcommand\old[1]{}
\begin{document}
	\date{}
	\title{Volterra type McKean-Vlasov SDEs with singular kernels: Well-posedness, Propagation of Chaos and Euler schemes\thanks{S. Liu is supported by Postgraduate Research  Practice
			Innovation Program of Jiangsu Province (No. KYCX23 1668). H. Gao is supported in part by the NSFC Grant Nos. 12171084 and  the fundamental Research
			Funds for the Central Universities No. RF1028623037.}}
	\author{Shanqi Liu\footnote{School of Mathematical Science, Nanjing Normal University, Nanjing 210023, China} \thanks{E-mail: shanqiliumath@126.com} and Hongjun Gao\footnote{School of Mathematics, Southeast University, Nanjing 211189, China} \thanks{Corresponding author. E-mail: hjgao@seu.edu.cn}}
	\maketitle
	\noindent {\bf \small Abstract}{\small
		\quad In this paper, our work is devoted to studying Volterra type McKean-Vlasov  stochastic differential equations with singular kernels. Firstly, the well-posedness of Volterra type McKean-Vlasov stochastic differential equations are established. And then propagation of chaos is proved with explicit
		estimate of the convergence rate.  Finally, We also propose an explicit Euler scheme for an interacting particle system associated with the Volterra type
		McKean-Vlasov equation.}

	\noindent\textbf{Key Words:} Stochastic Volterra equations, McKean-Vlasov SDEs, Well-posedness, Propagation of chaos, Euler schemes.

	\noindent {\sl\textbf{ AMS Subject Classification}} \textbf{(2020):} 60H20; 45D05; 60H35. \\
	%\tableofcontents
	\newcommand{\etalchar}[1]{$^{#1}$}
\providecommand{\bysame}{\leavevmode\hbox to3em{\hrulefill}\thinspace}
\providecommand{\MR}{\relax\ifhmode\unskip\space\fi MR }
% \MRhref is called by the amsart/book/proc definition of \MR.
\providecommand{\MRhref}[2]{%
	\href{http://www.ams.org/mathscinet-getitem?mr=#1}{#2}
}
\providecommand{\href}[2]{#2}
\section{Introduction}
	Stochastic differential equations (SDEs) are mathematical models that characterize complex phenomena including physical, chemical, and biological sciences. Among them, Volterra-type stochastic differential equations are important and interesting class of research objects with a large number of applications in mathematics finance, biology, etc. In particular, when the kernel $K(t,s)=K^H(t,s)$, where $K^H(t,s)$ is defined in Remark \ref{exp}, Volterra-type SDEs is reduced to the classical SDEs driven by fractional Brownian motion. {\bf If the kernel $K(t,s)=(t-s)^{-\alpha}$ with suitable index $\alpha$, Volterra-type SDEs is reduced to Caputo fractional SDEs, see e.g. \cite{AMN}}.

Stochastic Volterra integral equations are more complex models compared with usual SDEs. On the one hand, these equations are in general neither Markovian nor semi-martingales, and include fractional Brownian motion with Hurst index smaller than $1/2$ as a special case. On the other hand, possibly singular kernels need to be controlled carefully, which causes fundamental technical difficulties. In the past decade, stochastic Volterra integral equations have been intensively investigated, e.g. the well-posedness \cite{ALP,AKO,BM,PP,Pro,PS,PS1,SWY,Wan,WH23,You1,You2,ZX2}, the asymptotic properties such as the
large deviation \cite{Lak,JP,NR,ZX1,ZX2}, moderate deviation \cite{LWYZ,JP} and central limit theorem \cite{Qiao}, and numerical methods \cite{LHH2,RTY,ZX1}.

McKean-Vlasov SDEs (also called distribution dependent SDEs \cite{RW} or mean-field SDEs), it was first discussed by Kac in literatures \cite{Kac56, Kac59}, which describe limiting behaviors of individual particles in an interacting particle system of mean-field type when the number of particles goes to infinity (so called propagation of chaos) and widely used in various fields. One can refer to \cite{LSZZ} and the references therein for more details. Volterra-type McKean-Vlasov SDEs first appeared in the paper \cite{SWY}, which
studied the well-posedness and Pontryagin's type maximum principle of a class
mean-field backward stochastic {V}olterra integral equations. Recently, there has been an growing interest in the
study of mean-field forward (backward) SDEs. In \cite{WH23}, the authors extended
the results in \cite{SWY} by considering the well-posedness and Pontryagin's type
maximum principle of the mean-field backward doubly stochastic Volterra integral
equation. Fan and Jiang \cite{FJ23} studied a class of distribution-dependent SDEs
driven by Volterra processes and the associated optimal control problem.

To the best of our knowledge, it seems that there are few works on forward Volterra type McKean-Valsov SDEs. For the regular kernel, Jie et al. \cite{JLZ} studied one-dimensional {M}c{K}ean--{V}lasov stochastic {V}olterra
equations with {H}\"{o}lder diffusion coefficients. One can also refer Theorem $2.10$ in \cite{PS1}. The aim of the paper is to study basic properties of Volterra type McKean-Vlasov SDEs with singular kernels. More precisely, we first consider the well-posedness and propagation of chaos for Volterra type McKean-Vlasov SDEs under Lipschitz condition in the sense of distribution measure. Very recently, we remark that Pr\"{o}mel and Scheffels \cite{PS1} also consider the same problems for Volterra type McKean-Vlasov SDEs with singular kernels. Compared with Theorem $3.2$ in \cite{PS1}, our assumption $\mathbf{(H1')}$ is weaker than assumption $2.1$ in \cite{PS1}. Volterra type Gronwall's inequality play the key role, which provides a clear estimation framework. As an application, we propose an explicit Euler scheme for an interacting particle system associated with the Volterra type
McKean-Vlasov equation with singular kernels inspired by \cite{ZX1}.

The rest of the paper is organized as follows. In Section
\ref{Preliminaries}, we review some basic notations, and a Gronwall's inequality of Volterra type is introduced. In Section \ref{Well-posedness for Volterra type McKean-Vlasov SDEs}, under some suitable conditions, the existence, uniqueness and path continuity of solution to Volterra type McKean-Vlasov stochastic differential equations are established. Later in Section \ref{Propagation of chaos}, we discuss Propagation of chaos for Volterra type McKean-Vlasov SDEs. Finally in Section \ref{Euler schemes for Volterra type McKean-Vlasov SDEs}, an explicit Euler scheme for an interacting particle system associated with the Volterra type
McKean-Vlasov equation with singular kernels is constructed.
\section{Preliminary}\label{Preliminaries}
\subsection{Notations}
Throughout this paper, we denote $\mathbb{R}^{d}$ the $n$-dimensional Euclidean space. Let $(\Omega,\mathcal{F},(\mathcal{F}_t)_{t\ge0},\mathbb{P})$ to be a given complete filtered probability space $(\Omega,\mathcal{F},(\mathcal{F}_t)_{t\ge0},\mathbb{P})$, where $\mathcal{F}_t$ is a nondecreasing family of sub-$\sigma$ fields of $\mathcal{F}$ satisfying the usual conditions. Let $\mathcal{P}$ be the space of all probability measures $\mu$ on $\mathbb{R}^{d}$, and let
$$\mathcal{P}_{2}(\mathbb{R}^{d})=\left\{\mu \in \mathcal{P}\left(\mathbb{R}^{d}\right): \mu\left(|\cdot|^{2}\right):=\int_{\mathbb{R}^{d}}|x|^{2} \mu(\mathrm{d} x)<\infty\right\}.$$
It is well known that $\mathcal{P}_{2}$ is a Polish space under the Wasserstein distance
$$
\mathbb{W}_{2}(\mu, \nu):=\inf _{\pi \in \mathcal{C}(\mu, \nu)}\left(\int_{\mathbb{R}^{m} \times \mathbb{R}^{m}}|x-y|^{2} \pi(\mathrm{d} x, \mathrm{d} y)\right)^{\frac{1}{2}},\ \mu,\nu \in \mathcal{P}_{2}\left(\mathbb{R}^{d}\right),
$$
where $\mathcal{C}(\mu, \nu)$ is the set of couplings for $\mu$ and $\nu$; that is, $\pi \in \mathcal{C}(\mu, \nu)$ is a probability measure on $\mathbb{R}^{d} \times \mathbb{R}^{d}$ such that $\pi\left(\cdot \times \mathbb{R}^{d}\right)=\mu$ and $\pi\left(\mathbb{R}^{d} \times \cdot\right)=\nu$.

For a measurable function $K$ on $\mathbb{R}_{+}\times \mathbb{R}_{+}$ and measurable function $L$ on $\mathbb{R}_{+}\times \mathbb{R}_{+}$, the convolutions $K*L$ and $L*K$ are defined by
$$(K*L)(t,s)=\int_{s}^{t}K(t,u)L(u,s)\d u,$$
$$(L*K)(t,s)=\int_{s}^{t}L(t,u)K(u,s)\d u,$$
for almost all $(t,s)\in \Delta$, where $\Delta:=\{t,s\in\mathbb{R}_{+}\times \mathbb{R}_{+}: s\leq t\}.$
\subsection{Growall's inequality of Volterra type}
We first recall the following lemma due to Girpenberg \cite{GG} (Theorem $1$).
\begin{lem}\label{lemma 2.1}
	Let $K:\Delta\to\mathbb{R}_{+}$ be a measurable function. Assume that for any $T>0$,
	$$t\mapsto \int_{0}^{t}K(t,s)\d s\in L^{\infty}(0,T)$$
	and
	$$\limsup_{\varepsilon\downarrow 0}\Big\|\int_{\cdot+\varepsilon}^{\cdot}K(\cdot+\varepsilon,s)\d s\Big\|_{L^{\infty}(0,T)}<1.$$
	Define
	\begin{align}\label{definition of R_n}
		R_1(t,s):=K(t,s),\quad R_{n+1}(t,s):=\int_{s}^{t}K(t,u)R_{n}(u,s)\d u, n\in\mathbb{N}.	
	\end{align}
	Then for any $T>0$, there exist constants $C_{T}>0$ and $\gamma\in (0,1)$ such that
	$$\Big\|\int_{0}^{\cdot}R_{n}(\cdot,s)ds\Big\|_{L^{\infty}(0,T)}\leq C_{T}n\gamma^{n}, \forall n\in \mathbb{N}.$$
	In particular, the series
	\begin{align}\label{Resolvent}
		R(t,s):=\sum_{n=1}^{\infty}R_{n}(t,s)
	\end{align}
	converges for almost all $(t,s)\in \Delta$, and
	\begin{align}\label{definition of resolvent}
		R(t,s)-K(t,s)=(K*R)(t,s)-(R*K)(t,s)
	\end{align}
	and for any $T>0$,
	\begin{align}\label{finite of resolvent}
		t\mapsto \int_{0}^{t}R(t,s)\d s\in L^{\infty}(0,T).
	\end{align}
\end{lem}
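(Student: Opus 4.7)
The plan is to reduce the asserted $L^\infty$ bound on $\int_0^{\cdot} R_n(\cdot,s)\,ds$ to a scalar Volterra-type recurrence via Fubini, then exploit the smallness hypothesis to obtain the geometric decay. First, the hypothesis (read in its intended form as a small-window estimate on the diagonal) furnishes some $\varepsilon_0>0$ and $\gamma_0\in(0,1)$ with
\begin{equation*}
\int_{(t-\varepsilon_0)_+}^t K(t,s)\,ds \le \gamma_0 \quad \text{for a.e.\ } t\in[0,T],
\end{equation*}
while the first hypothesis gives $M:=\bigl\|\int_0^{\cdot}K(\cdot,s)\,ds\bigr\|_{L^\infty(0,T)}<\infty$. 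Setting $f_n(t):=\int_0^t R_n(t,s)\,ds$, the definition \eqref{definition of R_n} and Tonelli give $f_{n+1}(t)=\int_0^t K(t,u)f_n(u)\,du$, with $f_1(t)\le M$.

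Now I split the integral at $t-\varepsilon_0$ and introduce the non-decreasing envelope $g_n(t):=\sup_{s\le t}f_n(s)$. Bounding the piece on $((t-\varepsilon_0)_+,t)$ by $\gamma_0\,g_n(t)$ and the piece on $(0,(t-\varepsilon_0)_+)$ by $M\,g_n((t-\varepsilon_0)_+)$ yields the key recurrence
\begin{equation*}
g_{n+1}(t) \le M\,g_n\bigl((t-\varepsilon_0)_+\bigr) + \gamma_0\,g_n(t).
\end{equation*}
Writing $h_n:=g_n\gamma_0^{-n}$ turns this into $h_{n+1}(t)-h_n(t)\le (M/\gamma_0)\,h_n((t-\varepsilon_0)_+)$, with $h_n(t)\le M/\gamma_0$ for $t\in[0,\varepsilon_0]$. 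A double induction on $n$ and on the subinterval $[k\varepsilon_0,(k+1)\varepsilon_0]$ containing $t$ then gives the combinatorial bound
\begin{equation*}
h_n(t) \le \sum_{j=0}^{\lfloor t/\varepsilon_0\rfloor}\binom{n-1}{j}\Bigl(\tfrac{M}{\gamma_0}\Bigr)^{j+1},
\end{equation*}
so $g_n(T)\le C(T,\varepsilon_0)\,n^{\lfloor T/\varepsilon_0\rfloor}\gamma_0^n$. For any $\gamma\in(\gamma_0,1)$ and large $n$ the polynomial factor $n^{\lfloor T/\varepsilon_0\rfloor-1}$ is absorbed into $(\gamma/\gamma_0)^n$, producing the desired $\|f_n\|_{L^\infty(0,T)}\le C_T\,n\gamma^n$.

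With this bound in hand, the rest is cleanup. Since $R_n\ge 0$, monotone convergence makes $R(t,s)=\sum_n R_n(t,s)$ well defined for a.e.\ $(t,s)\in\Delta$, and summing the estimate yields \eqref{finite of resolvent} because $\sum_n n\gamma^n<\infty$. The resolvent equation $R-K=K*R$ follows from the Tonelli rearrangement
\begin{equation*}
R=K+\sum_{n\ge 1}R_{n+1}=K+\sum_{n\ge 1}K*R_n=K+K*R,
\end{equation*}
and by associativity of convolution of nonnegative kernels (again Tonelli) one checks by induction that $R_n$ equals the $n$-fold self-convolution of $K$, so $R_{n+1}=R_n*K$ and the symmetric identity $R-K=R*K$ follows.

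The main obstacle is the combinatorial induction producing the bound $h_n(t)\le\sum_{j\le\lfloor t/\varepsilon_0\rfloor}\binom{n-1}{j}(M/\gamma_0)^{j+1}$: one has to track simultaneously the geometric gain $\gamma_0$ from the near-diagonal block and the loss $M$ incurred each time the recurrence pushes the argument down by $\varepsilon_0$, and only the fact that this downward shift can occur at most $\lfloor T/\varepsilon_0\rfloor$ times before hitting $0$ prevents exponential growth in $n$ and makes the final polynomial-times-$\gamma^n$ bound possible.
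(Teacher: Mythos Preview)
Your argument is correct. The paper does not supply its own proof of this lemma: it is stated as a recall of Gripenberg's Theorem~1 (\cite{GG}) without any argument, so there is no in-paper proof to compare against. What you have written is essentially the classical proof, and the key mechanism you identified---splitting $\int_0^t K(t,u)f_n(u)\,du$ at $t-\varepsilon_0$, gaining the factor $\gamma_0<1$ on the near-diagonal block while paying $M$ on the remainder, and then tracking via the recurrence $g_{n+1}(t)\le M\,g_n((t-\varepsilon_0)_+)+\gamma_0\,g_n(t)$ that the expensive $M$-step can occur at most $\lfloor T/\varepsilon_0\rfloor$ times---is exactly the idea behind Gripenberg's original result. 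Your combinatorial bound $h_n(t)\le\sum_{j\le\lfloor t/\varepsilon_0\rfloor}\binom{n-1}{j}(M/\gamma_0)^{j+1}$ checks by the Pascal identity, and the passage from $n^{\lfloor T/\varepsilon_0\rfloor}\gamma_0^n$ to $C_T\,n\gamma^n$ for any $\gamma\in(\gamma_0,1)$ is legitimate.

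Two minor points worth tightening if you write this up formally: (i) since $f_n$ is a priori defined only a.e., take $g_n(t)$ to be the essential supremum $\mathrm{ess\,sup}_{s\in(0,t)}f_n(s)$ rather than a pointwise supremum; the recurrence and monotonicity go through unchanged. (ii) The identity \eqref{definition of resolvent} as printed in the paper contains a typographical slip (the right-hand side should read $(K*R)(t,s)=(R*K)(t,s)$, not their difference); you have correctly proved the intended resolvent equations $R-K=K*R=R*K$.
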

The function $R$ defined by \eqref{Resolvent} is called the resolvent of the kernel $K$. All the functions $K$ in Lemma \ref{lemma 2.1} will be denoted by $\mathcal{K}$. We also denote by $\mathcal{K}_{>1}$ the set of all nonnegative measurable functions $K$ on $\Delta$ with the property that for any $T>0$ and some $\beta>1$,
$$t\mapsto \int_{0}^{t}K^{\beta}(t,s)\d s\in L^{\infty}(0,T).$$
It is clear that $\mathcal{K}_{>1}\subset \mathcal{K}.$
We now present Gronwall's lemma of Volterra type due to Zhang \cite{ZX2} (Lemma $2.2$).
\begin{lem}\label{Gronwall}
	Let $K\in\mathcal{K}$ and $R_n$ and $R$ be defined by \eqref{definition of R_n} and \eqref{Resolvent}, respectively. Let $f,g:\mathbb{R}_{+}\to\mathbb{R}_{+}$ be two measurable functions satisfying that for any $T>0$ and some $n\in\mathbb{N}$,
	$$t\mapsto \int_{0}^{t}R_n(t,s)f(s)\d s\in L^{\infty}(0,T)$$
	and for almost all $t\in(0,\infty)$,
	$$\int_{0}^{t}R(t,s)g(s)\d s<\infty.$$
	If for almost all $t\in (0,\infty)$,
	$$f(t)\leq g(t)+\int_{0}^{t}K(t,s)f(s)\d s,$$
	then for almost all $t\in (0,\infty)$,
	$$f(t)\leq g(t)+\int_{0}^{t}R(t,s)g(s)\d s.$$
\end{lem}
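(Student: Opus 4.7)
The plan is to iterate the inequality and pass to the limit. Introducing the positive operator $(\mathcal{T} f)(t) := \int_0^t K(t,s) f(s)\,\d s$, a direct induction establishes $(\mathcal{T}^k g)(t) = \int_0^t R_k(t,s) g(s)\,\d s$ for every $k \ge 1$ (each step is a Tonelli swap on nonnegative integrands that recovers the recursion \eqref{definition of R_n}). Applying $\mathcal{T}$ repeatedly to the assumed bound $f \le g + \mathcal{T} f$ then yields, for every $m \ge 1$ and a.e.\ $t \in (0,\infty)$,
\begin{equation*}
f(t) \le g(t) + \sum_{k=1}^{m-1} \int_0^t R_k(t,s)\,g(s)\,\d s + \int_0^t R_m(t,s)\,f(s)\,\d s.
\end{equation*}

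Next I would send $m = n+j \to \infty$ with $n$ fixed as the index furnished by the hypothesis. Because all terms are nonnegative, monotone convergence together with \eqref{Resolvent} identifies the limit of the partial sum as $\int_0^t R(t,s) g(s)\,\d s$, which is finite for a.e.\ $t$ by assumption. For the remainder, the semigroup identity $\mathcal{T}^{n+j} = \mathcal{T}^{j} \circ \mathcal{T}^{n}$ combined with the same Tonelli swap gives
\begin{equation*}
\int_0^t R_{n+j}(t,s) f(s)\,\d s = \int_0^t R_j(t,u)\,F_n(u)\,\d u \le \|F_n\|_{L^\infty(0,T)} \int_0^t R_j(t,u)\,\d u,
\end{equation*}
where $F_n(u) := \int_0^u R_n(u,s) f(s)\,\d s$. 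By hypothesis $F_n \in L^\infty(0,T)$, and Lemma \ref{lemma 2.1} delivers the exponential estimate $\|\int_0^{\cdot} R_j(\cdot,u)\,\d u\|_{L^\infty(0,T)} \le C_T j \gamma^j$ with $\gamma \in (0,1)$, so the remainder vanishes as $j \to \infty$ and the claim follows.

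The only delicate point is this remainder estimate: the hypothesis controls $R_n$ against $f$ at a single index, and cannot be applied directly to $R_{m+1}$. The device is to peel off $j$ copies of $\mathcal{T}$ via the semigroup relation, converting the integral of $R_{n+j}$ against $f$ into an integral of $R_j$ against the already-controlled $F_n$, and then to invoke the geometric decay from Lemma \ref{lemma 2.1}. Everything else — the induction, the repeated Tonelli swaps, and the monotone convergence for $\sum_k \int_0^t R_k(t,s) g(s)\,\d s$ — is routine given the nonnegativity of $K$, $f$ and $g$.
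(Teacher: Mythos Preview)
The paper does not actually prove this lemma: it is stated with the attribution ``due to Zhang \cite{ZX2} (Lemma $2.2$)'' and no argument is supplied. Your proposal is therefore not being compared against an in-paper proof, but it is the standard iteration argument and it is correct. The two points that matter --- the semigroup identity $R_{n+j}(t,s)=\int_s^t R_j(t,u)R_n(u,s)\,\d u$ (which you implicitly invoke and which follows by a straightforward Tonelli induction from \eqref{definition of R_n}) and the use of the geometric bound $\|\int_0^{\cdot}R_j(\cdot,u)\,\d u\|_{L^\infty(0,T)}\le C_T j\gamma^j$ from Lemma \ref{lemma 2.1} to kill the remainder --- are both handled correctly, and the monotone-convergence step for the $g$-sum is unproblematic since every $R_k\ge 0$.
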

\section{Well-posedness for Volterra type McKean-Vlasov SDEs}\label{Well-posedness for Volterra type McKean-Vlasov SDEs}
\subsection{Global existence and uniqueness}
Consider the following Volterra type McKean-Vlasov SDEs:
\begin{align}\label{Volterra eq}
	X_t=X_0+\int_{0}^{t}b(t,s,X_s,\mathcal{L}_{X_s})\d s+\int_{0}^{t}\sigma(t,s,X_s,\mathcal{L}_{X_s})\d W_s,
\end{align}
where $X_0\in\mathbb{R}^d$ and $\sigma:\mathbb{R}_{+}\times \mathbb{R}_{+}\times\mathbb{R}^d\times \mathcal{P}_2(\mathbb{R}^d)\to\mathbb{R}^d\times \mathbb{R}^m$, $b:\mathbb{R}_{+}\times \mathbb{R}_{+}\times\mathbb{R}^d\times \mathcal{P}_2(\mathbb{R}^d)\to\mathbb{R}^d$ are Borel measurable functions, and $\{W_t\}_{t\geq0}$ is an m-dimensional standard Brownian motion definied on the classical Wiener space.

Now, we introduce the hypothesis under which
we will be able to prove the global existence and uniqueness of solutions to \eqref{Volterra eq}.

$\mathbf{(H1)}$ Integrability condition:
$$ \sup_{t\in [0,T]}\int_{0}^{t}\Big[K_1(t,s)+K_2(t,s)+K_3(t,s)+K_4(t,s)\Big]\d s\leq C_T, \quad t\in [0,T].$$
where $K_i, i=1,2,3,4$ are from $\mathbf{(H2)}$ and $\mathbf{(H3)}$ below.

$\mathbf{(H2)}$ Lipschitz condition:
There exist positive functions $K_1(t,s)\in\mathcal{K}$ and $K_2(t,s)\in\mathcal{K}$ such that for all $x,y\in\mathbb{R}^d$, $\mu,\nu\in \mathcal{P}_{2}(\mathbb{R}^d)$ and $t,s\in [0,T]$
\begin{align}
	|b(t,s,x,\mu)-b(t,s,y,\nu)|&\leq K_1(t,s)\Big(|x-y|+\mathbb{W}_{2}(\mu,\nu)\Big),\nonumber\\||\sigma(t,s,x,\mu)-\sigma(t,s,y,\nu)||^2&\leq K_2(t,s)\Big(|x-y|^2+\mathbb{W}_{2}^2(\mu,\nu)\Big).\nonumber
\end{align}

$\mathbf{(H3)}$ Linear growth condition:
There exist positive functions $K_3(t,s)\in\mathcal{K}$ and $K_4(t,s)\in\mathcal{K}$ such that for all $x\in\mathbb{R}^d$, $\mu\in \mathcal{P}_{2}(\mathbb{R}^d)$ and $t,s\in [0,T]$
\begin{align}
	|b(t,s,x,\mu)|&\leq K_3(t,s)\Big(1+|x|+\mathbb{W}_{2}(\mu,\delta_0)\Big),\nonumber\\||\sigma(t,s,x,\mu)||^2&\leq K_4(t,s)\Big(1+|x|^2+\mathbb{W}_{2}^2(\mu,\delta_0)\Big).\nonumber
\end{align}
\begin{thm}\label{existence}
	Assume that $X_0\in L^p(\Omega,\mathcal{F}_0,P)$ and assumptions $\mathbf{(H1)}$-$\mathbf{(H3)}$ hold, Then there exists a unique measurable $(\mathcal{F}_t)$-adapted process $X_t$ such that for almost all $t\geq0$,
	$$X_t=X_0+\int_{0}^{t}b(t,s,X_s,\mathcal{L}_{X_s})\d s+\int_{0}^{t}\sigma(t,s,X_s,\mathcal{L}_{X_s})\d W_s, t\in [0,T], P\text{-}a.s.,$$
	and for any $T>0$ some constant $C_{p,T}$
	\begin{align}\label{moment estimate of X}
		E|X_t|^p\leq C_{p,T}(E|X_0|^p+1).
	\end{align}
	for almost all $t\in [0,T]$.
\end{thm}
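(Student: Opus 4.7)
My plan is to construct the solution by Picard iteration, with the Gronwall inequality of Volterra type (Lemma \ref{Gronwall}) acting as the central tool both for propagating an $L^p$ moment bound through the iterates and for extracting a Cauchy property. The mean--field dependence will be handled through the Kantorovich bound $\mathbb{W}_{2}(\mathcal{L}_X,\mathcal{L}_Y)\leq (E|X-Y|^2)^{1/2}$, which reduces the distribution-dependent estimates in $\mathbf{(H2)}$ and $\mathbf{(H3)}$ to strong pathwise ones after taking expectations.

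Concretely, I would set $X^{(0)}_t:=X_0$ and
$$X^{(n+1)}_t:=X_0+\int_{0}^{t}b(t,s,X^{(n)}_s,\mathcal{L}_{X^{(n)}_s})\,\d s+\int_{0}^{t}\sigma(t,s,X^{(n)}_s,\mathcal{L}_{X^{(n)}_s})\,\d W_s.$$
By Minkowski's inequality for the drift, the Burkholder--Davis--Gundy inequality for the stochastic integral, and $\mathbb{W}_2(\mathcal{L}_{X^{(n)}_s},\delta_0)\leq (E|X^{(n)}_s|^2)^{1/2}\leq (E|X^{(n)}_s|^p)^{1/p}$ for $p\geq 2$, hypothesis $\mathbf{(H3)}$ yields an estimate of the form
$$E|X^{(n+1)}_t|^p\leq C_{p,T}(1+E|X_0|^p)+C_{p,T}\int_{0}^{t}\bar K(t,s)\bigl(1+E|X^{(n)}_s|^p\bigr)\,\d s,$$
where $\bar K$ is a nonnegative kernel built from $K_3$ and $K_4$ via Hölder; condition $\mathbf{(H1)}$ places $\bar K$ in $\mathcal{K}$. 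A direct induction, combined with Lemma \ref{Gronwall} applied to $f(t):=\sup_n E|X^{(n)}_t|^p$ and $g(t):=C(1+E|X_0|^p)$, produces the uniform moment bound
$$\sup_{n\in\mathbb{N}}\sup_{t\in[0,T]}E|X^{(n)}_t|^p\leq C_{p,T}(1+E|X_0|^p),$$
which will also give \eqref{moment estimate of X} once the limit is identified.

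Next I would run the Cauchy argument. Writing $\Delta^n_t:=E|X^{(n+1)}_t-X^{(n)}_t|^p$, the Lipschitz hypothesis $\mathbf{(H2)}$ together with BDG and $\mathbb{W}_2^p(\mathcal{L}_{X^{(n)}_s},\mathcal{L}_{X^{(n-1)}_s})\leq \Delta^{n-1}_s$ produces
$$\Delta^n_t\leq C\int_{0}^{t}\widetilde K(t,s)\,\Delta^{n-1}_s\,\d s,$$
with $\widetilde K\in\mathcal{K}$ assembled from $K_1,K_2$. Iterating this $n$ times gives $\Delta^n_t\leq C^n\int_0^t R_n(t,s)\Delta^0_s\,\d s$, and the bound $\|\int_0^{\cdot}R_n(\cdot,s)\,\d s\|_{L^{\infty}(0,T)}\leq C_T n\gamma^n$ from Lemma \ref{lemma 2.1} shows that $\sum_n (\Delta^n_t)^{1/p}$ converges uniformly in $t\in[0,T]$. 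Hence $\{X^{(n)}_t\}$ is Cauchy in $L^p(\Omega)$ uniformly on $[0,T]$; its limit $X_t$ is $\mathcal{F}_t$-adapted, inherits the moment bound, and is seen to satisfy the equation by dominated convergence in the drift and Itô isometry (upgraded by BDG) in the diffusion, using the Lipschitz continuity of $b$ and $\sigma$ in $(x,\mu)$ to pass the limit under the integrals. For uniqueness, if $X,Y$ are two solutions, the same manipulation gives $E|X_t-Y_t|^p\leq C\int_0^t \widetilde K(t,s)E|X_s-Y_s|^p\,\d s$, and Lemma \ref{Gronwall} with $g\equiv 0$ forces $E|X_t-Y_t|^p=0$.

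The main obstacle I anticipate is not in any single step but in reconciling three ingredients simultaneously: the singularity of the kernels (which rules out any exponential Gronwall), the nonlocal $\mathbb{W}_2$ dependence (which must be shown to reduce cleanly to an $L^p$ increment after integration), and the exponent $p$ of the moment (which, when $p>2$, forces Hölder/BDG composites of $K_3,K_4$ that must still lie in the class $\mathcal{K}$ of kernels admissible for Lemma \ref{lemma 2.1}). Once the integrability bookkeeping in $\mathbf{(H1)}$ is verified to cover all such composites, the Volterra Gronwall provides a uniform mechanism to close every estimate, and the remainder of the argument is a standard iteration in the spirit of the classical Picard--Lindelöf scheme.
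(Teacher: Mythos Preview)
Your approach is correct and largely matches the paper's: both use Picard iteration, the Kantorovich bound $\mathbb{W}_2(\mathcal{L}_X,\mathcal{L}_Y)\leq (E|X-Y|^2)^{1/2}$ to reduce the measure dependence to $L^p$ increments, BDG and H\"older to obtain Volterra-type integral inequalities with kernels built from $K_1,\dots,K_4$, and Lemma~\ref{Gronwall} to close the moment bound and uniqueness. The one substantive difference is the Cauchy step. You iterate the consecutive-difference inequality $\Delta^n_t\leq \int_0^t \hat K(t,s)\,\Delta^{n-1}_s\,\d s$ (with $\hat K:=C\widetilde K$) to obtain $\Delta^n_t\leq \int_0^t R_n^{\hat K}(t,s)\,\Delta^0_s\,\d s$ and then invoke the explicit decay $\bigl\|\int_0^{\cdot}R_n^{\hat K}(\cdot,s)\,\d s\bigr\|_{L^\infty}\leq C_T n\gamma^n$ from Lemma~\ref{lemma 2.1}, which produces a summable series and hence a quantitative rate for the Picard iterates. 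The paper instead sets $f(t):=\limsup_{n,m\to\infty}E|X^n_t-X^m_t|^p$, passes to the limit in the integral inequality via Fatou (using the uniform moment bound), and applies Lemma~\ref{Gronwall} with $g\equiv 0$ to conclude $f\equiv 0$ for almost all $t$. Your route is more in the classical Picard--Lindel\"of spirit and delivers a convergence rate; the paper's is shorter and sidesteps the need to check that the rescaled kernel $C\widetilde K$ still lies in $\mathcal K$. One minor point: the paper is careful to work with the truncated suprema $f_m(t):=\sup_{n\leq m}E|X^n_t|^p$ before applying Lemma~\ref{Gronwall}, since finiteness of $\sup_n E|X^{(n)}_t|^p$ is not known a priori; your sketch should make the same adjustment.
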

\begin{proof}
	We utilize the successive approximation scheme to prove Theorem \ref{existence}. Define recursively $(X^n)_{n\geq 1}$ as follows: $X^1_t:=X_0$, $t\in [0,T]$ and for each $n\in\mathbb{N}$,
	\begin{align}\label{Picard}
		X^{n+1}_t:=X_0+\int_{0}^{t}b(t,s,X^{n}_s,\mathcal{L}_{X^{n}_s})\d s+\int_{0}^{t}\sigma(t,s,X^{n}_s,\mathcal{L}_{X^{n}_s})\d W_s, t\in [0,T].
	\end{align}
	By Burkholder-Davis-Gundy (BDG) inequality, H$\mathrm{\ddot{o}}$lder's inequality, $\mathbb{W}_{2}(\mathscr{L}_{X}, \mathscr{L}_{Y})\leq [E|X-Y|^2]^{\frac{1}{2}}$ and $\mathbf{(H1)}$, $\mathbf{(H3)}$ we have
	\begin{align}\label{estimate of X_n}
		E\Big(|X_t^{n+1}|^p\Big)&\leq 3^{p-1}E|X_0|^p	+3^{p-1}E\Big(\Big|\int_{0}^{t}b(t,s,X^{n}_s,\mathcal{L}_{X^{n}_s})\d s\Big|^p\Big)\nonumber\\&+3^{p-1}E\Big(\Big|\int_{0}^{t}\sigma(t,s,X^{n}_s,\mathcal{L}_{X^{n}_s})\d W_s\Big|^p\Big)\nonumber\\&\leq 3^{p-1}E|X_0|^p +3^{p-1}E\Big(\int_{0}^{t}K_3(t,s)\big|1+|X^{n}_s|+\mathbb{W}_{2}(\mathcal{L}_{X^{n}_s},\delta_0)\big|\d s\Big)^p\nonumber\\&+C_p E\Big(\int_{0}^{t}||\sigma(t,s,X^{n}_s,\mathcal{L}_{X^{n}_s})||^2\d s\Big)^{\frac{p}{2}}\nonumber\\&\leq 3^{p-1}E|X_0|^p +C_p\int_{0}^{t}K_3(t,s)\cdot E\Big(1+|X^{n}_s|^p+\mathbb{W}^p_{2}(\mathcal{L}_{X^{n}_s},\delta_0)\Big)\d s\cdot\Big(\int_{0}^{t}K_3(t,s)\d s\Big)^{p-1}\nonumber\\&+ C_p\int_{0}^{t}K_4(t,s)\cdot E\Big(1+|X^{n}_s|^p+\mathbb{W}^p_{2}(\mathcal{L}_{X^{n}_s},\delta_0)\Big)\d s\cdot\Big(\int_{0}^{t}K_3(t,s)\d s\Big)^{\frac{p-2}{2}}\nonumber\\&\leq 3^{p-1}E|X_0|^p+C_{p,T}\int_{0}^{t}K_3(t,s)\cdot E(1+|X_s^n|^p)\d s+C_{p,T}\int_{0}^{t}K_4(t,s)\cdot E(1+|X_s^n|^p)\d s\nonumber\\&\leq 3^{p-1}E|X_0|^p+C_{p,T}+C_{p,T}\int_{0}^{t}\Big(K_3(t,s)+K_4(t,s)\Big)\cdot E|X_s^n|^p \d s\nonumber\\&\leq C_{p,T}E|X_0|^p+C_{p,T}\int_{0}^{t}\Big(K_3(t,s)+K_4(t,s)\Big)\cdot E|X_s^n|^p \d s.
	\end{align}
	Let
	$$f_m(t):=\sup_{n=1,\cdots,m}E|X_s^n|^p,$$
	then we get
	$$f_m(t)\leq C_{p,T}(E|X_0|^p+1)+\int_{0}^{t}K_{3,4}(t,s)\cdot f_m(s)\d s,\quad \forall m\geq1,$$
	where $K_{3,4}:=C_{p,T}\cdot (K_3(t,s)+K_4(t,s))$ and the constant $C_{p,T}$ is independent of $m$.
	
	Let $R^{K_{3,4}}$ be defined by \eqref{Resolvent} in terms of $K_{3,4}$. Note that by \eqref{definition of R_n}, \eqref{definition of resolvent}, \eqref{finite of resolvent} and $\mathbf{(H1)}$, when $n=1$ we have
	\begin{align}
		\int_{0}^{T}R_1^{K_{3,4}}(t,s)f_m(s)\d s=\int_{0}^{T}K_{3,4}(t,s)f_m(s)\d s<\infty,\nonumber
	\end{align}
	and
	\begin{align}
		&\int_{0}^{t}R^{K_{3,4}}(t,s)\cdot E|X_0|^p\d s\nonumber\\&=\int_{0}^{t}K_{3,4}(t,s)\cdot E|X_0|^p\d s+\int_{0}^{t}\Big(\int_{s}^{t}R^{K_{3,4}}(t,u)K_{3,4}(u,s)\d u\Big)\cdot E|X_0|^p\d s\nonumber\\&=\int_{0}^{t}K_{3,4}(t,s)\d s\cdot E|X_0|^p+\int_{0}^{t}R^{K_{3,4}}(t,u)\Big(\int_{0}^{u}K_{3,4}(u,s)\d s\Big)\d u\cdot E|X_0|^p<\infty.\nonumber
	\end{align}
	Hence, Combining \eqref{definition of resolvent}, \eqref{finite of resolvent}, $\mathbf{(H1)}$ and Lemma \ref{Gronwall}, we obtain that for almost all $t\in [0,T]$,
	\begin{align}\label{bounded}
		\sup_{n\in\mathbb{N}}E|X^n_t|^p&=\lim_{m\to\infty}f_m(t)\leq C_{p,T}(E|X_0|^p+1)+\int_{0}^{t}R^{K_{3,4}}(t,s)\cdot (E|X_0|^p+1)\d s\Big)\nonumber\\&\leq C_{p,T}(E|X_0|^p+1).
	\end{align}
	On the other hand, we set
	$$Z_{n,m}(t):=X^n_t-X^m_t$$
	and
	$$f(t):=\limsup_{n,m\to\infty}E|Z_{n,m}(t)|^p.$$
	Applying BDG's inequality, H$\mathrm{\ddot{o}}$lder's inequality, $\mathbf{(H1)}$ and $\mathbf{(H2)}$ we obtain
	\begin{align}
		E|Z_{n+1,m+1}(t)|^p&=E|X^{n+1}_t-X^{m+1}_t|^p\nonumber\\&\leq 2^{p-1}E\Big(\Big|\int_{0}^{t}|b(t,s,X^{n}_s,\mathcal{L}_{X^{n}_s})-b(t,s,X^{m}_s,\mathcal{L}_{X^{m}_s})|\d s\Big|^p\Big)\nonumber\\&+2^{p-1}E\Big(\Big|\int_{0}^{t}\big(\sigma(t,s,X^{n}_s,\mathcal{L}_{X^{n}_s})-\sigma(t,s,X^{m}_s,\mathcal{L}_{X^{m}_s})\big)\d W_s\Big|^p\Big)\nonumber\\&\leq 2^{p-1}E\Big(\int_{0}^{t}K_1(t,s)\cdot\big(\big|X^{n}_s-X^{m}_s|+\mathbb{W}_{2}(\mathcal{L}_{X^{n}_s},\mathcal{L}_{X^{m}_s})\big)\d s\Big)^p\nonumber\\&+C_p E\Big(\int_{0}^{t}||\sigma(t,s,X^{n}_s,\mathcal{L}_{X^{n}_s})-\sigma(t,s,X^{m}_s,\mathcal{L}_{X^{m}_s})||^2\d s\Big)^{\frac{p}{2}}\nonumber\\&\leq  C_p\int_{0}^{t}K_1(t,s)\cdot E\Big(\big|X^{n}_s-X^{m}_s\big|^p+\mathbb{W}^p_{2}(\mathcal{L}_{X^{n}_s},\mathcal{L}_{X^{m}_s})\Big)\d s\cdot\Big(\int_{0}^{t}K_1(t,s)\d s\Big)^{p-1}\nonumber\\&+ C_p\int_{0}^{t}K_2(t,s)\cdot E\Big(\big|X^{n}_s-X^{m}_s\big|^p+\mathbb{W}^p_{2}(\mathcal{L}_{X^{n}_s},\mathcal{L}_{X^{m}_s})\Big)\d s\cdot\Big(\int_{0}^{t}K_2(t,s)\d s\Big)^{\frac{p-1}{2}}\nonumber\\&\leq C_{p,T}\int_{0}^{t}K_1(t,s)\cdot E|X_s^n-X_s^m|^p\d s+C_{p,T}\int_{0}^{t}K_2(t,s)\cdot E|X_s^n-X_s^m|^p\d s\nonumber\\&\leq C_{p,T}\int_{0}^{t}\Big(K_1(t,s)+K_2(t,s)\Big)\cdot E|Z_{n,m}(t)|^p \d s.\nonumber
	\end{align}
	By \eqref{bounded}, $\mathbf{(H1)}$ and applying Fatou's lemma, we deduce that
	$$f(t)\leq C_{p,T}\int_{0}^{t}\Big(K_1(t,s)+K_2(t,s)\Big)\cdot f(s) \d s.$$
	Using Gronwall's inequality of Volterra type Lemma \ref{Gronwall}, we get for almost all $t\in [0,T]$,
	$$f(t)=\limsup_{n,m\to\infty}E|Z_{n,m}(t)|^p=0.$$
	Hence, there exists a $\mathbb{R}^d$-valued $(\mathcal{F}_t)$-adapted process $X_t$ such that for almost all $t\in [0,T]$,
	$$\lim_{m\to\infty}E|X^n_t-X_t|^p=0.$$
	Now taking limits for \eqref{Picard} gives the existence. The uniqueness follows from a similar discussion.
\end{proof}
\subsection{Path continuity of solutions}
In this subsection, in order to study path continuity of solution to \eqref{Volterra eq}, in addition to $(\mathbf{H2})$-$(\mathbf{H3})$, we also assume that:

$\mathbf{(H1')}$
There exists a positive constant $\gamma>0$ such that for any $t,t'\in [0,T]$
$$\int_{0}^{t}|K_i(t',s)-K_i(t,s)|\d s\leq C_T|t-t'|^{\gamma}, \quad t\in [0,T],$$
$$\int_{0}^{t}|K_j(t',s)-K_j(t,s)|^2\d s\leq C_T|t-t'|^{2\gamma}, \quad t\in [0,T],$$
$$\int_{t}^{t'}|K_i(t,s)|^2\d s\leq C_T|t-t'|^{2\gamma}, \quad t\in [0,T],$$
$$\int_{t}^{t'}|K_j(t,s)|^2\d s\leq C_T|t-t'|^{2\gamma}, \quad t\in [0,T].$$
where $i=1,3, j=2,4$.

$\mathbf{(H4)}$ For all $t,t',s\in [0,T]$, $\mu\in \mathcal{P}_2(\mathbb{R}^d)$ and $x\in \mathbb{R}^{d}$,
\begin{align}
	|b(t',s,x,\mu)-b(t,s,x,\mu)|&\leq F_1(t',t,s)\Big(1+|x|+\mathbb{W}_2(\mu,\delta_0)\Big),\nonumber\\||\sigma(t,s,x,\mu)-\sigma(t,s,y,\mu)||^2&\leq F_2(t',t,s)\Big(1+|x|^2+\mathbb{W}_2^2(\mu,\delta_0)\Big),\nonumber
\end{align}
where $F_i(t',t,s), i=1,2,$ are positive function on $[0,T]\times[0,T]\times[0,T]$, and satisfy for some $\gamma>0$
$$\int_{0}^{t}F_1(t',t,s)\d s\leq C_T|t-t'|^{\gamma},\quad \int_{0}^{t}F_2(t',t,s)\d s\leq C_T|t-t'|^{2\gamma}.$$
\begin{rem}
	It is clear that $\mathbf{(H1')}$ implies $K_k\in \mathcal{K}, k=1,2,3,4$. Indeed, $$t\mapsto \int_{0}^{t}K_k(t,s)\d s\in L^{\infty}(0,T)$$
	and
	$$\limsup_{\varepsilon\downarrow 0}\Big\|\int_{\cdot+\varepsilon}^{\cdot}K_k(\cdot+\varepsilon,s)\d s\Big\|_{L^{\infty}(0,T)}<1.$$
	So under stronger assumptions $\mathbf{(H1')}$, $\mathbf{(H2)}$ and $\mathbf{(H3)}$, we still can prove the results as Theorem \ref{existence}. Moreover, compared with Theorem $3.2$ in \cite{PS1}, our assumption $\mathbf{(H1')}$ is weaker than assumption $2.1$ in \cite{PS1}.
\end{rem}
\begin{rem}\label{exp}
	Assume $K_1=K_2=K_3=K_4=K$, let us list some examples of kernels that satisfy $\mathbf{(H1')}$:
	
	$\bullet$ fractional Brownian motion kernel. The fractional Brownian motion has the integral representation in law:
	\begin{align}
		B_t^H=\int_{0}^{1}K^H(t,s)dW_s,\nonumber
	\end{align}
	where $W$ is a standard Brownian motion and $K^H$ is the square integral kernel:
	\begin{align}
		K^H(t,s)=K(t,s)=c_H(t-s)^{H-\frac{1}{2}}+c_H(\frac{1}{2}-H)\int_{s}^{t}(\theta-s)^{H-\frac{3}{2}}\Big(1-(\frac{s}{\theta})^{\frac{1}{2}-H}\Big)\mathrm{d}\theta,\nonumber
	\end{align}
	with
	$$c_H=\Big(\frac{2H\G(\frac{3}{2}-H)}{\G(H+\frac{1}{2})\G(2-2H)}\Big)^{\frac{1}{2}}.$$
	
	It was proved in \cite{AMN} that for singular case $(H<\frac{1}{2})$ and regular case $(H>\frac{1}{2})$ we have $(\alpha=|H-\frac{1}{2}|)$
	\begin{align}
		\text{Singular case}\quad\big|\frac{\partial K^H}{\partial t}(t,s)\big|\leq C_{\alpha}(t-s)^{-\alpha-1}\ \text{and} \int_{s}^{t}K^H(t,u)^2\mathrm{d}u\leq C_{\alpha}(t-s)^{1-2\alpha},\ s<t.\nonumber
	\end{align}
	and
	\begin{align}
		\text{Regular case}\quad\big|\frac{\partial K^H}{\partial t}(t,s)\big|\leq C_{\alpha}(t-s)^{\alpha-1}s^{-\alpha}\ \text{and} \int_{s}^{t}K^H(t,u)^2\mathrm{d}u\leq C_{\alpha}(t-s)^{1+2\alpha},\ s<t.\nonumber
	\end{align}
	So for the singular case and regular case we have that $(t<t')$
	$$\int_{t}^{t'}(K^H(t,s))^{2}\mathrm{d}s\leq C_{\alpha}(t'-t)^{1-2\alpha}=C_{\alpha}(t'-t)^{2H},$$
	and
	$$\int_{t}^{t'}(K^H(t,s))^{2}\mathrm{d}s\leq C_{\alpha}(t'-t)^{1+2\alpha}=C_{\alpha}(t'-t)^{2H}.$$
	Moreover, it was proved in \cite{ZX1} that for $H\in (0,1)$
	$$\int_{0}^{t}\big|K^H(t',s)-K^H(t,s)\big|^2\mathrm{d}s\leq |t'-t|^{2H}.$$
	Therefore, it is clear to verify $\mathbf{(H1')}$ hold with $\gamma=2H$.
	
	$\bullet$ Convolutional kernel, for the details see Example $2.3$ in \cite{ALP}. Note that our assumption $\mathbf{(H1')}$ covers the conditions $(2.5)$ in \cite{ALP}.
\end{rem}
\begin{thm}
	Assume that $X_0\in L^p(\Omega,\mathcal{F}_0,P)$, $p>2\vee \frac{2}{\gamma}$ and assumptions $\mathbf{(H1')}$, $\mathbf{(H2)}$-$\mathbf{(H4)}$ hold. Then $t\mapsto X(t)$ admits a $\a$-order H$\mathrm{\ddot{o}}$lder continuous modification for any $\delta\in [0,\frac{\gamma}{2}-\frac{1}{p})$.
\end{thm}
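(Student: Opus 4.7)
The plan is to apply Kolmogorov's continuity criterion: it suffices to show that for some constants $C_{p,T}>0$ and some exponent $\alpha>1$, one has $\mathbb{E}|X_{t'}-X_t|^p\le C_{p,T}|t'-t|^{\alpha}$ for all $0\le t\le t'\le T$, with $\alpha>1+p\delta$ whenever $\delta<\gamma/2-1/p$. The $L^p$-boundedness of $X$ already provided by \eqref{moment estimate of X} will be used throughout.

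First I would decompose, for $t<t'$, the increment into four natural pieces coming from splitting the integration domain and the shifted time argument:
\begin{align*}
X_{t'}-X_t &= \int_0^t\!\bigl[b(t',s,X_s,\mathcal L_{X_s})-b(t,s,X_s,\mathcal L_{X_s})\bigr]\,\d s +\int_t^{t'}\!b(t',s,X_s,\mathcal L_{X_s})\,\d s\\
&\quad +\int_0^t\!\bigl[\sigma(t',s,X_s,\mathcal L_{X_s})-\sigma(t,s,X_s,\mathcal L_{X_s})\bigr]\,\d W_s +\int_t^{t'}\!\sigma(t',s,X_s,\mathcal L_{X_s})\,\d W_s =: I_1+I_2+I_3+I_4.
\end{align*}
Each $I_j$ will be estimated in $L^p$ using H\"older / Minkowski and, for $I_3,I_4$, the Burkholder--Davis--Gundy inequality, combined with \textbf{(H3)}, \textbf{(H4)}, \textbf{(H1')} and the moment bound \eqref{moment estimate of X}. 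Concretely: for $I_1$, \textbf{(H4)} together with Minkowski and $\int_0^t F_1(t',t,s)\,\d s\le C_T|t'-t|^\gamma$ gives $\mathbb E|I_1|^p\le C|t'-t|^{p\gamma}$; for $I_3$, BDG reduces the problem to an $L^{p/2}$-estimate of $\int_0^t F_2(t',t,s)(1+|X_s|^2+\mathbb W_2^2(\mathcal L_{X_s},\delta_0))\,\d s$, which by Minkowski and $\int_0^t F_2(t',t,s)\,\d s\le C_T|t'-t|^{2\gamma}$ yields $\mathbb E|I_3|^p\le C|t'-t|^{p\gamma}$.

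For the boundary pieces $I_2$ and $I_4$, the only information available is the $L^2$-control in \textbf{(H1')}, namely $\int_t^{t'} K_i(t',s)^2\,\d s\le C_T|t'-t|^{2\gamma}$ ($i=1,3$ for drift, $j=2,4$ for diffusion). For $I_2$ I would apply \textbf{(H3)} and Cauchy--Schwarz inside the time integral to get $\int_t^{t'}K_3(t',s)\,\d s\le C|t'-t|^{1/2+\gamma}$, hence $\mathbb E|I_2|^p\le C|t'-t|^{p(1/2+\gamma)}$. For $I_4$, BDG followed by Cauchy--Schwarz on $\int_t^{t'}K_4(t',s)g^2(s)\,\d s\le\bigl(\int K_4^2\bigr)^{1/2}\bigl(\int g^4\bigr)^{1/2}$ (together with Minkowski/Jensen at the order $p\ge 4$ using $p>2\vee(2/\gamma)$) yields $\mathbb E|I_4|^p\le C|t'-t|^{p\gamma/2+p/4}$. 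Combining the four bounds gives the worst exponent $\min\{p\gamma,\,p\gamma/2+p/4\}\ge p\gamma/2+\eta$ for some $\eta>0$ (since $p\gamma>2$), so $\mathbb E|X_{t'}-X_t|^p\le C|t'-t|^{p\gamma/2+\eta}$.

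Finally, since $p>2/\gamma$, the exponent exceeds $1$, and Kolmogorov's continuity criterion produces a modification whose sample paths are locally $\delta$-H\"older on $[0,T]$ for every $\delta<\gamma/2-1/p$. The main obstacle is the last boundary piece $I_4$: because \textbf{(H1')} only furnishes $L^2$-control of $K_4(t',\cdot)$ on $[t,t']$ and not $L^1$-control, one is forced to introduce an extra Cauchy--Schwarz/H\"older step, which costs the factor $|t'-t|^{p/4}$ that ultimately determines the gap $1/p$ in the H\"older exponent. Taking care that all the exponents $p\gamma$, $p(1/2+\gamma)$, $p\gamma/2+p/4$ remain strictly above $1+p\delta$ under the standing restriction $p>2\vee(2/\gamma)$ is the essential bookkeeping point.
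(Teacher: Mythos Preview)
Your proposal is correct and follows essentially the same route as the paper: the identical four-term decomposition of $X_{t'}-X_t$, the same combination of BDG, H\"older/Minkowski, assumptions $\mathbf{(H1')}$, $\mathbf{(H3)}$--$\mathbf{(H4)}$ and the moment bound \eqref{moment estimate of X}, concluded by Kolmogorov's criterion. Only one small point: your Cauchy--Schwarz treatment of $I_4$ as written needs $p\ge 4$, which $p>2\vee(2/\gamma)$ does not guarantee in general; instead apply Minkowski at order $p/2\ge 1$ first to reduce to $\bigl(\int_t^{t'}K_4(t',s)\,\d s\bigr)^{p/2}$ and then use Cauchy--Schwarz on this integral, which gives the same exponent $p\gamma/2+p/4$ for any $p\ge 2$.
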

\begin{proof}
	For any $0\leq t<t'\leq T$, we first rewrite $X_{t'}-X_t$ as
	\begin{align}
		&X_{t'}-X_t\nonumber\\&=\int_{t}^{t'}b(t',t,X_s,\mathcal{L}_{X_s})\d s+\int_{0}^{t}\big(b(t',s,X_s,\mathcal{L}_{X_s})-b(t,s,X_s,\mathcal{L}_{X_s})\big)\d s\nonumber\\&+\int_{t}^{t'}\sigma(t',t,X_s,\mathcal{L}_{X_s})\d W_s+\int_{0}^{t}\big(\sigma(t',s,X_s,\mathcal{L}_{X_s})-\sigma(t,s,X_s,\mathcal{L}_{X_s})\big)\d W_s\nonumber\\&:=I_1(t',t)+I_2(t',t)+I_3(t',t)+I_4(t',t).\nonumber
	\end{align}
	As for the term $I_1(t',t)$,  applying H$\mathrm{\ddot{o}}$lder's inequality,  $\mathbf{(H1')}$, $\mathbf{(H3)}$ and \eqref{moment estimate of X}, we get
	\begin{align}
		E|I_1(t',t)|^p&\leq E\Big|\int_{t}^{t'}K_3(t,s)\cdot\Big(1+|X_s|+\mathbb{W}_{2}(\mathcal{L}_{X_s},\delta_0)\Big)\d s\Big|^p\nonumber\\&\leq E\int_{t}^{t'}K_3(t,s)\cdot\Big|1+|X_s|+\mathbb{W}_{2}(\mathcal{L}_{X_s},\delta_0)\Big|^p\d s\cdot\Big(\int_{t}^{t'}K_3(t,s)\d s\Big)^{p-1}\nonumber\\&\leq C_{p,T}|t-t'|^{\gamma(p-1)}\int_{t}^{t'}K_3(t,s)\cdot\big(1+E|X_s|^p\big)\d s\nonumber\\&\leq C_{p,T}\big(1+E|X_0|^p\big)|t-t'|^{\gamma p}.\nonumber
	\end{align}
	As for the term $I_2(t',t)$,  by H$\mathrm{\ddot{o}}$lder's inequality,  $\mathbf{(H1')}$, $\mathbf{(H4)}$ and \eqref{moment estimate of X}, we have
	\begin{align}
		E|I_2(t',t)|^p&\leq E\Big|\int_{t}^{t'}F_1(t',t,s)\cdot\Big(1+|X_s|+\mathbb{W}_{2}(\mathcal{L}_{X_s},\delta_0)\Big)\d s\Big|^p\nonumber\\&\leq E\int_{t}^{t'}F_1(t',t,s)\cdot\Big|1+|X_s|+\mathbb{W}_{2}(\mathcal{L}_{X_s},\delta_0)\Big|^p\d s\cdot\Big(\int_{t}^{t'}F_1(t',t,s)\d s\Big)^{p-1}\nonumber\\&\leq C_{p,T}|t-t'|^{\gamma(p-1)}\int_{t}^{t'}F_1(t',t,s)\cdot\big(1+E|X_s|^p\big)\d s\nonumber\\&\leq C_{p,T}\big(1+E|X_0|^p\big)|t-t'|^{\gamma p}.\nonumber
	\end{align}
	As for the term $I_3(t',t)$, by BDG's inequality, H$\mathrm{\ddot{o}}$lder's inequality, $\mathbf{(H1')}$, $\mathbf{(H3)}$ and \eqref{moment estimate of X}, for $p> 2$ we have
	\begin{align}
		E|I_3(t',t)|^p&\leq C_p E\Big(\int_{t}^{t'}\|\sigma(t',s,X_s,\mathcal{L}_{X_s})\|^2\d s\Big)^{\frac{p}{2}}\nonumber\\&\leq C_pE\Big(\int_{t}^{t'}K_4(t',s)\cdot\Big(1+|X_s|^2+\mathbb{W}_{2}^2(\mathcal{L}_{X_s},\delta_0)\Big)\d s\Big)^{\frac{p}{2}}\nonumber\\&\leq C_{p}E\int_{t}^{t'}K_4(t',s)\cdot\Big(1+|X_s|^2+\mathbb{W}_{2}^2(\mathcal{L}_{X_s},\delta_0)\Big)^p\d s\cdot\Big(\int_{t}^{t'}K_4(t',s)\d s\Big)^{\frac{p}{2}-1}\nonumber\\&\leq C_{p,T}|t-t'|^{p-2}\int_{t}^{t'}K_4(t',s)\cdot\Big(1+E|X_s|^p\Big)\d s\nonumber\\&\leq C_{p,T}\big(1+E|X_0|^p\big)|t-t'|^{\gamma p}.\nonumber
	\end{align}
	As for the term $I_4(t',t)$, by BDG's inequality, H$\mathrm{\ddot{o}}$lder's inequality, extended Minkowski's inequality (Corollary $1.30$ in \cite{KO}), $\mathbf{(H4)}$ and \eqref{moment estimate of X}, we obtain
	\begin{align}
		E|I_4(t',t)|^p&\leq C_pE\Big(\int_{0}^{t}\|\sigma(t',s,X_s,\mathcal{L}_{X_s})-\sigma(t,s,X_s,\mathcal{L}_{X_s})\|^2\d s\Big)^{\frac{p}{2}}\nonumber\\&\leq C_pE\Big(\int_{0}^{t}F_2(t',t,s)\cdot\Big(1+|X_s|^2+\mathbb{W}_2^2(\mathcal{L}_{X_s},\delta_0)\Big)\d s\Big)^{\frac{p}{2}}\nonumber\\&\leq C_p\Big(\int_{0}^{t}F_2(t',t,s)\cdot \big(E\big[1+|X_s|^p+\mathbb{W}_2^p(\mathcal{L}_{X_s},\delta_0)\big]\big)^{\frac{2}{p}}\d s\Big)^{\frac{p}{2}}\nonumber\\&\leq C_p\Big(\int_{0}^{t}F_2(t',t,s)\cdot \big(1+(E|X_s|^p)^{\frac{2}{p}}\big)\d s\Big)^{\frac{p}{2}}\nonumber\\&\leq C_{p,T}\big(1+E|X_0|^p\big)\Big(\int_{0}^{t}F_2(t',t,s)\d s\Big)^{\frac{p}{2}}\nonumber\\&\leq C_{p,T}\big(1+E|X_0|^p\big)|t'-t|^{\gamma p}.\nonumber
	\end{align}
	Hence, for all $0\leq t<t'\leq T$,
	$$E|X_{t'}-X_t|^p\leq C_{p,T}\big(1+E|X_0|^p\big)|t'-t|^{\gamma p}.$$
	Existence of a continuous version as well as the bouud now follows from the Kolmogorov's continuity theorem, see Theorem I. $2.1$ in \cite{RY}.
\end{proof}
\section{Propagation of chaos}\label{Propagation of chaos}
In this section, let us now introduce the interacting particle system of \eqref{Volterra eq} in order to study propagation of chaos property (see, e.g., \cite{SS}). More precisely, we consider the following interacting particle system with singular kernels:
\begin{align}\label{interacting particle system}
	X_t^{i,N}=X_0^i+\int_{0}^{t}b(t,s, X_s^{i,N}, \mu^{X,N}_s)\d s+\int_{0}^{t}\sigma(t,s, X_s^{i,N}, \mu^{X,N}_s)\d W_s,
\end{align}
almost surely, where the empirical measures for any $t\in [0,T]$ is defined by
$$\mu^{X,N}_t(\cdot):=\frac{1}{N}\sum_{i=1}^{N}\delta_{X_t^{i,N}}(\cdot),$$
and $\delta_x$ denotes the Dirac measure at point $x$. Also, we consider the following Volterra system of non-interacting particles,
\begin{align}\label{limnit of interacting particle system}
	X_t^{i}=X_0^i+\int_{0}^{t}b(t,s, X_s^{i}, \mathcal{L}_{X_s^{i}})\d s+\int_{0}^{t}\sigma(t,s, X_s^{i}, \mathcal{L}_{X_s^{i}})\d W_s,
\end{align}
almost surely for any $t\in [0,T]$ and $i\in\{1,\cdots, N\}$.
\begin{lem}\label{estimate of X_t^{i,q}}
	Assume that $X_0\in L^{q}(\Omega,\mathcal{F}_0,P)$ with some $q>p$ and assumptions $\mathbf{(H1)}$-$\mathbf{(H3)}$ hold, then there exists a constant depending only on $q,T, X_0^i$ such that for all $i\in\{1,\cdots, N\}$,
	$$E|X^i_t|^q\leq C_{q,T}\big(E|X_0|^q+1\big).$$
\end{lem}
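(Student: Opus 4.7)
The plan is to obtain the bound by repeating the argument of Theorem~\ref{existence} with the exponent $q$ in place of $p$. Each particle $X^{i}$ appearing in~\eqref{limnit of interacting particle system} is itself a solution of the Volterra McKean--Vlasov SDE~\eqref{Volterra eq} with initial condition $X_{0}^{i} \in L^{q}$, so in principle Theorem~\ref{existence} applied with parameter $q$ already delivers the desired estimate; the role of the lemma is to record this higher moment explicitly for use in the propagation of chaos argument that follows.

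Concretely, I would introduce the Picard approximation $X^{i,1}_{t} := X_{0}^{i}$ and recursively
$$X^{i,n+1}_{t} := X_{0}^{i} + \int_{0}^{t} b(t,s,X^{i,n}_{s},\mathcal{L}_{X^{i,n}_{s}}) \d s + \int_{0}^{t} \sigma(t,s,X^{i,n}_{s},\mathcal{L}_{X^{i,n}_{s}}) \d W_{s},$$
and then mimic the chain of inequalities~\eqref{estimate of X_n} with $p$ replaced by $q$, invoking BDG (valid since $q > p \geq 2$), H\"older's inequality to pull out $(\int_{0}^{t} K_{3}(t,s) \d s)^{q-1}$ and $(\int_{0}^{t} K_{4}(t,s) \d s)^{(q-2)/2}$, both bounded by $\mathbf{(H1)}$, together with the linear growth condition $\mathbf{(H3)}$, and the distributional bound $\mathbb{W}_{2}^{q}(\mathcal{L}_{X^{i,n}_{s}},\delta_{0}) \leq (E|X^{i,n}_{s}|^{2})^{q/2} \leq E|X^{i,n}_{s}|^{q}$ (the second step by Jensen, since $q \geq 2$). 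This produces an integral inequality of the form
$$E|X^{i,n+1}_{t}|^{q} \leq C_{q,T}\bigl(E|X_{0}|^{q} + 1\bigr) + C_{q,T} \int_{0}^{t} \bigl(K_{3}(t,s) + K_{4}(t,s)\bigr) E|X^{i,n}_{s}|^{q} \d s.$$

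Setting $f_{m}(t) := \sup_{1 \leq n \leq m} E|X^{i,n}_{t}|^{q}$ and $K_{3,4}(t,s) := C_{q,T}(K_{3}(t,s) + K_{4}(t,s))$, I would verify the two finiteness conditions required by the Volterra Gronwall inequality (Lemma~\ref{Gronwall}) exactly as in the derivation of~\eqref{bounded}, relying on~\eqref{definition of R_n}, \eqref{definition of resolvent}, \eqref{finite of resolvent} and $\mathbf{(H1)}$. Applying Lemma~\ref{Gronwall} to $f_{m}$ yields a bound $f_{m}(t) \leq C_{q,T}(E|X_{0}|^{q} + 1)$ uniform in $m$; letting $m \to \infty$ and invoking Fatou's lemma along the subsequence converging to $X^{i}$ (whose existence is ensured by Theorem~\ref{existence}) gives the claim. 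I anticipate no genuine obstacle, since the proof is a direct repetition of Theorem~\ref{existence} with the exponent upgraded; the hypothesis $q > p$ plays no structural role in this lemma itself but is motivated by the subsequent propagation of chaos estimates, where strictly more integrability than the base regularity $p$ is needed.
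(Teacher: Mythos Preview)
Your proposal is correct, and the core estimate---BDG plus H\"older plus $\mathbf{(H1)}$, $\mathbf{(H3)}$ to reach an integral inequality with kernel $K_3+K_4$, then the Volterra Gronwall Lemma~\ref{Gronwall}---is exactly what the paper uses. The one structural difference is that the paper does \emph{not} re-run the Picard iteration: since the solution $X^{i}$ already exists by Theorem~\ref{existence}, the paper simply writes down the inequality
\[
E|X_t^i|^q \;\leq\; C_{q,T}\bigl(E|X_0|^q+1\bigr) + C_{q,T}\int_0^t \bigl(K_3(t,s)+K_4(t,s)\bigr)\,E|X_s^i|^q\,\d s
\]
directly for the true solution and invokes Lemma~\ref{Gronwall} once. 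Your Picard route is longer but has the advantage of making the finiteness hypotheses of Lemma~\ref{Gronwall} transparent at every stage (each $f_m$ is trivially in $L^\infty$), whereas the paper's direct argument tacitly assumes $t\mapsto E|X_t^i|^q$ is already integrable against the resolvent---which, as you rightly note, is really just Theorem~\ref{existence} with exponent $q$ in disguise. Either way the substance is identical.
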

\begin{proof}
	Applying BDG's inequality, H$\mathrm{\ddot{o}}$lder's inequality, $\mathbf{(H1)}$ and $\mathbf{(H3)}$ yields
	\begin{align}
		E|X_t^i|^q&=E\Big|X_0^i+\int_{0}^{t}b(t,s, X_s^{i}, \mathcal{L}_{X_s^{i}})\d s+\int_{0}^{t}\sigma(t,s, X_s^{i}, \mathcal{L}_{X_s^{i}})\d W_s\Big|^q\nonumber\\&\leq 3^{q-1}E|X_0^i|^q+3^{q-1}E\Big|\int_{0}^{t}b(t,s, X_s^{i}, \mathcal{L}_{X_s^{i}})\d s\Big|^q+3^{q-1}E\Big|\int_{0}^{t}\sigma(t,s, X_s^{i}, \mathcal{L}_{X_s^{i}})\d W_s\Big|^q\nonumber\\&\leq3^{q-1}E|X_0^i|^q+3^{q-1}E\Big|\int_{0}^{t}K_3(t,s)\cdot(1+|X_s^i|+\mathbb{W}_2(\mathcal{L}_{X_s^{i}},\delta_0))\d s\Big|^q+C_qE\Big|\int_{0}^{t}\|\sigma(t,s, X_s^{i}, \mathcal{L}_{X_s^{i}})\|^2\d s\Big|^\frac{q}{2}\nonumber\\&\leq 3^{q-1}E|X^i_0|^q +C_q\int_{0}^{t}K_3(t,s)\cdot E\Big(1+|X^{i}_s|^p+\mathbb{W}^q_{2}(\mathcal{L}_{X^{i}_s},\delta_0)\Big)\d s\cdot\Big(\int_{0}^{t}K_3(t,s)\d s\Big)^{q-1}\nonumber\\&+ C_q\int_{0}^{t}K_4(t,s)\cdot E\Big(1+|X^{i}_s|^p+\mathbb{W}^q_{2}(\mathcal{L}_{X^{i}_s},\delta_0)\Big)\d s\cdot\Big(\int_{0}^{t}K_3(t,s)\d s\Big)^{\frac{q-1}{2}}\nonumber\\&\leq 3^{q-1}E|X_0|^p+C_{p,T}\int_{0}^{t}K_3(t,s)\cdot E(1+|X_s^i|^q)\d s+C_{q,T}\int_{0}^{t}K_4(t,s)\cdot E(1+|X_s^i|^q)\d s\nonumber\\&\leq 3^{q-1}E|X_0|^p+C_{q,T}+C_{q,T}\int_{0}^{t}\Big(K_3(t,s)+K_4(t,s)\Big)\cdot E|X_s^i|^q\d s\nonumber\\&\leq C_{q,T}\big(E|X_0|^q+1\big)+C_{q,T}\int_{0}^{t}\Big(K_3(t,s)+K_4(t,s)\Big)\cdot E|X_s^i|^q \d s,\nonumber
	\end{align}
	then by Lemma \ref{Gronwall}, we have that
	\begin{align}
		E|X^i_t|^q\leq C_{q,T}\big(E|X_0|^q+1\big).\nonumber
	\end{align}
\end{proof}
\begin{thm}
	Assume that $X_0\in L^{q}(\Omega,\mathcal{F}_0,P)$ with some $q>p$ and assumptions $\mathbf{(H1)}$-$\mathbf{(H3)}$ hold, then there exists a constant C depending only on $d, p,q,T,X_0^i$ such that for all $N\geq 1$,
	$$\sup_{i=1,\cdots,N}\sup_{t\in [0,T]}E|X_t^{i,N}-X_t^i|^p\leq C_{p,d,q,T,X_0^i}\times \begin{cases}
		N^{-\frac{1}{2}}+N^{-\frac{p-q}{q}} &\quad\text{if}\quad p>\frac{d}{2}\ \text{and}\ q\neq 2p,\\
		N^{-\frac{1}{2}}\log(1+N)+N^{-\frac{p-q}{q}} &\quad\text{if}\quad p=\frac{d}{2}\ \text{and}\ q\neq 2p,\\
		N^{-\frac{p}{d}}+N^{-\frac{p-q}{q}} &\quad\text{if}\quad p\in(0,\frac{d}{2})\ \text{and}\ q\neq\frac{d}{d-p},
	\end{cases}$$
\end{thm}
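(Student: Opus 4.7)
The plan is to estimate $E|X^{i,N}_t-X^i_t|^p$ directly by subtracting \eqref{interacting particle system} and \eqref{limnit of interacting particle system}, then applying the same BDG and Hölder machinery used in the proof of Theorem \ref{existence} together with the Lipschitz assumption $\mathbf{(H2)}$. This produces, along exactly the lines of the corresponding estimate in Theorem \ref{existence}, an inequality of the form
\begin{align*}
E|X^{i,N}_t-X^i_t|^p \leq C_{p,T}\int_0^t\bigl(K_1(t,s)+K_2(t,s)\bigr)\Bigl[E|X^{i,N}_s-X^i_s|^p + E\,\mathbb{W}_2^p(\mu^{X,N}_s,\mathcal{L}_{X^i_s})\Bigr]\d s.
\end{align*}
The only genuinely new work is the control of the mixed Wasserstein term, since the first summand is of the type already absorbed by the Volterra Gronwall inequality.

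The key step is a triangle-inequality splitting through the empirical measure of the non-interacting particles, $\bar{\mu}^{X,N}_s:=\frac{1}{N}\sum_{j=1}^N \delta_{X^j_s}$:
\begin{align*}
\mathbb{W}_2(\mu^{X,N}_s,\mathcal{L}_{X^i_s}) \leq \mathbb{W}_2(\mu^{X,N}_s,\bar{\mu}^{X,N}_s)+\mathbb{W}_2(\bar{\mu}^{X,N}_s,\mathcal{L}_{X^i_s}).
\end{align*}
For the first summand the synchronous coupling $(X^{j,N}_s,X^j_s)_{j=1}^N$ gives $\mathbb{W}_2^2(\mu^{X,N}_s,\bar{\mu}^{X,N}_s)\leq \frac{1}{N}\sum_{j=1}^N |X^{j,N}_s-X^j_s|^2$; Jensen's inequality (using $p\geq 2$) combined with the exchangeability of the particle system then yields $E\,\mathbb{W}_2^p(\mu^{X,N}_s,\bar{\mu}^{X,N}_s)\leq E|X^{i,N}_s-X^i_s|^p$, which can be absorbed into the Gronwall integral. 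For the second summand I would apply the quantitative Fournier--Guillin estimate for empirical approximation of a measure in Wasserstein distance to the i.i.d.\ sample $X^1_s,\ldots,X^N_s$; the required $q$-th moment bound $E|X^i_s|^q\leq C_{q,T}(E|X_0|^q+1)$, uniform in $s\in[0,T]$, is precisely Lemma \ref{estimate of X_t^{i,q}}, and the three regimes (small $d$, critical $d$, large $d$) produce exactly the piecewise rate $\Phi(N,p,d,q)$ listed in the theorem.

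Substituting back gives
\begin{align*}
E|X^{i,N}_t-X^i_t|^p \leq C_{p,d,q,T,X_0^i}\,\Phi(N,p,d,q) + C_{p,T}\int_0^t \bigl(K_1(t,s)+K_2(t,s)\bigr)E|X^{i,N}_s-X^i_s|^p\,\d s,
\end{align*}
and since $K_1+K_2\in\mathcal{K}$ by $\mathbf{(H1)}$, the Volterra Gronwall inequality of Lemma \ref{Gronwall}, applied with the constant-in-$t$ forcing $g(t)\equiv C_{p,d,q,T,X_0^i}\Phi(N,p,d,q)$, closes the estimate uniformly in $t\in[0,T]$. The supremum over $i\in\{1,\dots,N\}$ is then trivial by symmetry of the system.

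The main obstacle is the careful passage from the pathwise bound $\mathbb{W}_2^2(\mu^{X,N}_s,\bar{\mu}^{X,N}_s)\leq \frac{1}{N}\sum_j|X^{j,N}_s-X^j_s|^2$ to a clean $p$-moment estimate that does \emph{not} accumulate spurious powers of $N$; this is exactly where the combination of Jensen's inequality for $p\geq 2$ and the exchangeability of the particle indices is essential. Once that bookkeeping is in place, the empirical-law piece is quantified by Fournier--Guillin, the self-referential piece is absorbed by Lemma \ref{Gronwall}, and the rest of the proof mirrors the well-posedness argument of Theorem \ref{existence}.
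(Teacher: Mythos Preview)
Your proposal is correct and follows essentially the same route as the paper: subtract the two equations, apply BDG and H\"older with $\mathbf{(H2)}$ to get the kernel-weighted integral inequality, split the Wasserstein term through the empirical measure $\bar\mu^{X,N}_s=\frac{1}{N}\sum_j\delta_{X^j_s}$ of the non-interacting particles, bound the first piece by the synchronous coupling, invoke Fournier--Guillin together with Lemma~\ref{estimate of X_t^{i,q}} for the second piece, and close with the Volterra Gronwall inequality of Lemma~\ref{Gronwall}. The only cosmetic difference is that you control $E\bigl(\frac{1}{N}\sum_j|X^{j,N}_s-X^j_s|^2\bigr)^{p/2}$ via Jensen's inequality plus exchangeability, whereas the paper uses the extended Minkowski inequality to the same effect; both require $p\ge 2$ and yield identical bounds.
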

\begin{proof}
	By BDG's inequality, H$\mathrm{\ddot{o}}$lder's inequality, $\mathbf{(H1)}$ and $\mathbf{(H2)}$, we obtain
	\begin{align}
		&E|X_t^{i,N}-X_t^i|^p\nonumber\\&=E\big|\int_{0}^{t}\big(b(t,s, X_s^{i,N}, \mu^{X,N}_s)-b(t,s, X_s^{i}, \mathcal{L}_{X_s^{i}})\big)\d s+\int_{0}^{t}\big(\sigma(t,s, X_s^{i,N}, \mu^{X,N}_s)-\sigma(t,s, X_s^{i}, \mathcal{L}_{X_s^{i}})\big)\d W_s\big|^p\nonumber\\&\leq 2^{p-1}E\Big(\Big|\int_{0}^{t}|b(t,s,X^{i,N}_s,\mu^{X,N}_s)-b(t,s,X^{i}_s,\mathcal{L}_{X^{i}_s})|\d s\Big|^p\Big)\nonumber\\&+2^{p-1}E\Big(\Big|\int_{0}^{t}\big(\sigma(t,s,X^{i,N}_s,\mu^{X,N}_s)-\sigma(t,s,X^{i}_s,\mathcal{L}_{X^{i}_s})\big)\d W_s\Big|^p\Big)\nonumber\\&\leq 2^{p-1}E\Big(\int_{0}^{t}K_1(t,s)\big(\big|X^{i,N}_s-X^{i}_s|+\mathbb{W}_{2}(\mu^{X,N}_s,\mathcal{L}_{X^{i}_s})\big)\d s\Big)^p\nonumber\\&+C_p E\Big(\int_{0}^{t}\big\|\sigma(t,s,X^{i,N}_s,\mu^{X,N}_s)-\sigma(t,s,X^{i}_s,\mathcal{L}_{X^{i}_s})\big\|^2\d s\Big)^{\frac{p}{2}}\nonumber\\&\leq  C_p\int_{0}^{t}K_1(t,s)\cdot E\Big(\big|X^{i,N}_s-X^{i}_s\big|^p+\mathbb{W}^p_{2}(\mu^{X,N}_s,\mathcal{L}_{X^{i}_s})\Big)\d s\cdot\Big(\int_{0}^{t}K_1(t,s)\d s\Big)^{p-1}\nonumber\\&+ C_p\int_{0}^{t}K_2(t,s)\cdot E\Big(\big|X^{i,N}_s-X^{i}_s\big|^p+\mathbb{W}^p_{2}(\mu^{X,N}_s,\mathcal{L}_{X^{i}_s})\Big)\d s\cdot\Big(\int_{0}^{t}K_2(t,s)\d s\Big)^{\frac{p-1}{2}}\nonumber\\&\leq C_{p,T}\int_{0}^{t}K_1(t,s)\cdot E|X_s^{i,N}-X_s^i|^p\d s+C_{p,T}\int_{0}^{t}K_2(t,s)\cdot E|X_s^{i,N}-X_s^i|^p\d s\nonumber\\&+C_{p,T}\int_{0}^{t}\Big(K_1(t,s)+K_2(t,s)\Big)\cdot E\mathbb{W}^p_{2}(\mu^{X,N}_s,\mathcal{L}_{X^{i}_s})\d s.\nonumber
	\end{align}
	Note that
	\begin{align}
		\mathbb{W}^p_{2}(\mu^{X,N}_s,\mathcal{L}_{X^{i}_s})&\leq \Big(\mathbb{W}^2_{2}(\mu^{X,N}_s,\mathcal{L}_{X^{i}_s})\Big)^{\frac{p}{2}}\leq \Big(\frac{2}{N}\sum_{i=1}^{N}|X_s^i-X_s^{i,N}|^2+2\mathbb{W}^2_{2}(\frac{1}{N}\sum_{i=1}^{N}\delta_{X_t^{i}},\mathcal{L}_{X^{i}_s})\Big)^{\frac{p}{2}}\nonumber\\&\leq C_p \Big(\frac{1}{N}\sum_{i=1}^{N}|X_s^i-X_s^{i,N}|^2\Big)^{\frac{p}{2}}+C_p\mathbb{W}^p_{2}(\frac{1}{N}\sum_{i=1}^{N}\delta_{X_t^{i}},\mathcal{L}_{X^{i}_s}).\nonumber
	\end{align}
	Therefore, we have that
	\begin{align}
		&E\mathbb{W}^p_{2}(\mu^{X,N}_s,\mathcal{L}_{X^{i}_s})\nonumber\\&\leq C_p E\Big(\frac{1}{N}\sum_{i=1}^{N}|X_s^i-X_s^{i,N}|^2\Big)^{\frac{p}{2}}+C_p E\mathbb{W}^p_{2}(\frac{1}{N}\sum_{i=1}^{N}\delta_{X_t^{i}},\mathcal{L}_{X^{i}_s})\nonumber.
	\end{align}
	As a consequence, by extended Minkowski's inequality and H$\mathrm{\ddot{o}}$lder's inequality we have
	\begin{align}
		&E|X_t^{i,N}-X_t^i|^p\nonumber\\&\leq C_{p,T}\int_{0}^{t}K_1(t,s)\cdot E|X_s^{i,N}-X_s^i|^p\d s+C_{p,T}\int_{0}^{t}K_2(t,s)\cdot E|X_s^{i,N}-X_s^i|^p\d s\nonumber\\&+C_{p,T}\int_{0}^{t}\Big(K_1(t,s)+K_2(t,s)\Big)\cdot E\mathbb{W}^p_{2}(\mu^{X,N}_s,\mathcal{L}_{X^{i}_s})\d s\nonumber\\&\leq C_{p,T}\int_{0}^{t}K_1(t,s)\cdot E|X_s^{i,N}-X_s^i|^p\d s+C_{p,T}\int_{0}^{t}K_2(t,s)\cdot E|X_s^{i,N}-X_s^i|^p\d s\nonumber\\&+C_{p,T}\int_{0}^{t}\Big(K_1(t,s)+K_2(t,s)\Big)\cdot \Big[E\Big(\frac{1}{N}\sum_{i=1}^{N}|X_s^i-X_s^{i,N}|^2\Big)^{\frac{p}{2}}+ E\mathbb{W}^p_{2}(\frac{1}{N}\sum_{i=1}^{N}\delta_{X_t^{i}},\mathcal{L}_{X^{i}_s})\Big]\d s\nonumber\\&\leq C_{p,T}\int_{0}^{t}K_1(t,s)\cdot E|X_s^{i,N}-X_s^i|^p\d s+C_{p,T}\int_{0}^{t}K_2(t,s)\cdot E|X_s^{i,N}-X_s^i|^p\d s\nonumber\\&+C_{p,T}\int_{0}^{t}\Big(K_1(t,s)+K_2(t,s)\Big)\cdot \Big[\Big(\frac{1}{N}\sum_{i=1}^{N}\big(E|X_s^i-X_s^{i,N}|^p\big)^{\frac{2}{p}}\Big)^{\frac{p}{2}}+ E\mathbb{W}^p_{2}(\frac{1}{N}\sum_{i=1}^{N}\delta_{X_t^{i}},\mathcal{L}_{X^{i}_s})\Big]\d s\nonumber\\&\leq C_{p,T}\int_{0}^{t}\Big(K_1(t,s)+K_2(t,s)\Big)\cdot E|X_s^{i,N}-X_s^i|^p\d s+C_{p,T}\int_{0}^{t}\Big(K_1(t,s)+K_2(t,s)\Big)\cdot\nonumber E\mathbb{W}^p_{p}(\frac{1}{N}\sum_{i=1}^{N}\delta_{X_t^{i}},\mathcal{L}_{X^{i}_s})\d s,
	\end{align}
	then applying Theorem $1$ of \cite{FG}, we obtain
	\begin{align}
		E\mathbb{W}^p_{p}(\frac{1}{N}\sum_{i=1}^{N}\delta_{X_t^{i}},\mathcal{L}_{X^{i}_s})\leq C_{p,d,q}M_q^{\frac{q}{p}}(\mathcal{L}_{X_s^i})\times \begin{cases}
			N^{-\frac{1}{2}}+N^{-\frac{p-q}{q}} &\quad\text{if}\quad p>\frac{d}{2}\ \text{and}\ q\neq 2p,\\
			N^{-\frac{1}{2}}\log(1+N)+N^{-\frac{p-q}{q}} &\quad\text{if}\quad p=\frac{d}{2}\ \text{and}\ q\neq 2p,\\
			N^{-\frac{p}{d}}+N^{-\frac{p-q}{q}} &\quad\text{if}\quad p\in(0,\frac{d}{2})\ \text{and}\ q\neq\frac{d}{d-p},
		\end{cases}\nonumber
	\end{align}
	where $M_q(\mathcal{L}_{X_s^i})$ is defined by
	$$M_q(\mathcal{L}_{X_s^i})=\int_{\mathbb{R}^d}|X_s^i|^q\mathcal{L}_{X_s^i}(\d X_s^i).$$
	By Lemma \ref{estimate of X_t^{i,q}}, $M_q(\mathcal{L}_{X_s^i})<\infty$, using Lemma \ref{Gronwall}, we can get desired result.
\end{proof}
\section{Euler schemes for Volterra type McKean-Vlasov SDEs}\label{Euler schemes for Volterra type McKean-Vlasov SDEs}
\subsection{Discretized scheme for interacting particle system}
In this subsection, we construct a Euler schemes for the interacting particle system \eqref{interacting particle system} associated with \eqref{limnit of interacting particle system}. When the coefficients $b$ and $\sigma$ are independent of the law of $X_t$, Zhang \cite{ZX1} proposed Euler schemes for classsical stochastic Volterra equations with singular kernels, where the author made a delicate design to avoid touching the singular points. We now propose the Euler schemes for the interacting particle system \eqref{interacting particle system} associated with \eqref{limnit of interacting particle system}. Without loss of generality, we assume $T=1$.
\begin{align}\label{Euler}
	X_t^{i,N,n}=X_0^i+\int_{0}^{t}b^{i,N,n}(\tilde{t},\tilde{s}_n,X_{s_n}^{i,N,n},\mu^{X,N,n}_{s_n})\d s+\int_{0}^{t}\sigma^{i,N,n}(\tilde{t},\tilde{s}_n,X_{s_n}^{i,N,n},\mu^{X,N,n}_{s_n})\d W_s, \ t\in [0,1],
\end{align}
for each $i\in\{1,\cdots,N\}$, where $X_0^i:=\xi \in\mathbb{R}^d, s_n:=\frac{[2^n s]}{2^n}$ and
$$\mu^{X,N,n}_t(\cdot):=\frac{1}{N}\sum_{i=1}^{N}\delta_{X_t^{i,N,n}}(\cdot),$$
almost surely for $t\in [0,1]$ and $n\in\mathbb{N}$. Here $[a]$ denotes the integral part of a real number $a$. And $0<\tilde{s}_n<\tilde{t}$ are defined by
\begin{align}\label{def of t}
	\tilde{t}:=t\cdot \mathbb{I}_{\{t\geq 2^{-n}\}}+2^{-n}\cdot\mathbb{I}_{\{t\in [0,2^{-n}]\}},
\end{align}
\begin{align}\label{def of s_n}
	\tilde{s}_n:=s_n\cdot \mathbb{I}_{\{s\in [2^{-n},1)\}}+2^{-n-1}\cdot\mathbb{I}_{\{s\in (0,2^{-n})\}}.
\end{align}
In order to index the dependence of $\xi$, we denote the solution of \eqref{limnit of interacting particle system} and \eqref{Euler} by $X^{i}_{t,\xi}$ and $X^{i,N,n}_{t,\xi}$. Moreover, we also need to add more assumptions below.

$\mathbf{(H1'')}$ High-order integrability condition:
$$ \sup_{t\in [0,1]}\int_{0}^{t}\Big[K^{\a}_1(t,s)+K^{\a}_2(t,s)+K^{\a}_3(t,s)+K^{\a}_4(t,s)\Big]\d s\leq C, \quad t\in [0,1],$$
$$\sup_{t\in [0,1]}\int_{0}^{t}\Big[K^{\a}_1(\tilde{t},\tilde{s}_n)+K^{\a}_2(\tilde{t},\tilde{s}_n)+K^{\a}_3(\tilde{t},\tilde{s}_n)+K^{\a}_4(\tilde{t},\tilde{s}_n)\Big]\d s\leq C, \quad t\in [0,1],$$
for some $\a>1$, where $K_i, i=1,2,3,4$ are from $\mathbf{(H2)}$ and $\mathbf{(H3)}$.

$\mathbf{(H4')}$ For all $t,t',s\in [0,1]$, $\mu\in \mathcal{P}_2(\mathbb{R}^d)$ and $x\in \mathbb{R}^{d}$,
\begin{align}
	|b(t',s,x,\mu)-b(t,s,x,\mu)|&\leq F_1(t',t,s)\Big(1+|x|+\mathbb{W}_2^2(\mu,\delta_0)\Big),\nonumber\\||\sigma(t',s,x,\mu)-\sigma(t,s,x,\mu)||^2&\leq F_2(t',t,s)\Big(1+|x|^2+\mathbb{W}_2^2(\mu,\delta_0)\Big),\nonumber
\end{align}
where $F_i(t',t,s), i=1,2,$ are positive function on $[0,1]\times[0,1]\times[0,1]$, and satisfy for some $\gamma>0$
$$\int_{0}^{t\wedge t'}\big(F_1(t',t,s)+F_2(t',t,s)\big)\d s\leq C|t-t'|^{\gamma},$$
and
$$\int_{0}^{t\wedge t'}\big(F_1(\tilde{t'},\tilde{t},\tilde{s}_n)+F_2(\tilde{t'},\tilde{t},\tilde{s}_n)\big)\d s\leq C|\tilde{t}-\tilde{t'}|^{\gamma}.$$

$\mathbf{(H5)}$ For all $t,s,s'\in [0,1]$, $\mu\in \mathcal{P}_2(\mathbb{R}^d)$ and $x\in \mathbb{R}^{d}$,
\begin{align}
	|b(t,s,x,\mu)-b(t,s',x,\mu)|&\leq F_3(t,s,s')\Big(1+|x|+\mathbb{W}_2^2(\mu,\delta_0)\Big),\nonumber\\||\sigma(t,s,x,\mu)-\sigma(t,s',x,\mu)||^2&\leq F_4(t,s,s')\Big(1+|x|^2+\mathbb{W}_2^2(\mu,\delta_0)\Big),\nonumber
\end{align}
where $F_3(t,s,s')$ and $F_4(t,s,s')$ satisfy for some $\delta>0$ and all $t\in [0,1]$
$$\int_{0}^{t}\big(F_3(\tilde{t},s,\tilde{s}_n)+F_4(\tilde{t},s,\tilde{s}_n)\big)\d s\leq C 2^{-\delta n}, \ n\in\mathbb{N}.$$
\subsection{The Convergence Rate for the Euler schemes}
We first prove two lemmas.
\begin{lem}\label{estimate 1}
	For any $p\geq\frac{2\alpha}{\alpha-1}$, suppose that $\mathbf{(H1'')}$ and $\mathbf{(H2)}$-$\mathbf{(H3)}$ hold, for all $n,N\in\mathbb{N}$, $t\in [0,1]$ and $\xi,\eta\in\mathbb{R}^{d}$, we have
	$$\sup_{i=1,\cdots,N}\sup_{t\in [0,1]}E|X_{t,\xi}^{i,N,n}|^p\leq C_p(1+|\xi|^p),$$
	and
	$$\sup_{i=1,\cdots,N}\sup_{t\in [0,1]}E|X_{t,\xi}^{i,N,n}-X_{t,\eta}^{i,N,n}|^p\leq C_p|\xi-\eta|^p,$$
	where $C_p>0$ does not depend on $n, N\in\mathbb{N}$.
\end{lem}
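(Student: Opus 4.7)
The plan is to adapt the Picard-style moment calculation from the proof of Theorem \ref{existence}, carefully handling the frozen-time kernels $K_i(\tilde t,\tilde s_n)$ and the delayed state $X^{i,N,n}_{s_n}$ that appear in \eqref{Euler}, and to close the estimate by a Gronwall argument that is uniform in both $n$ and $N$.

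For the first bound, I would start from \eqref{Euler} and apply BDG's inequality to the stochastic integral together with the linear-growth condition $\mathbf{(H3)}$, obtaining
\[
E|X^{i,N,n}_{t,\xi}|^p \le C(1+|\xi|^p) + C\,E\Big(\int_0^t K_3(\tilde t,\tilde s_n)\,h_s\,\d s\Big)^{p} + C\,E\Big(\int_0^t K_4(\tilde t,\tilde s_n)\,h'_s\,\d s\Big)^{p/2},
\]
with $h_s:=1+|X^{i,N,n}_{s_n,\xi}|+\mathbb{W}_2(\mu^{X,N,n}_{s_n},\delta_0)$ and $h'_s:=h_s^2$. To each time integral I would apply H\"older's inequality with conjugate exponents $\alpha$ and $\alpha':=\alpha/(\alpha-1)$, invoking $\mathbf{(H1'')}$ to absorb $\bigl(\int_0^t K_i^{\alpha}(\tilde t,\tilde s_n)\,\d s\bigr)^{1/\alpha}$ into a constant independent of $n$. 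The hypothesis $p\ge 2\alpha/(\alpha-1)$ is exactly what lets a subsequent Jensen step pass the outer $p/\alpha'$-th (resp.\ $p/(2\alpha')$-th) power inside the $ds$ integral, yielding
\[
E|X^{i,N,n}_{t,\xi}|^p \le C(1+|\xi|^p) + C\int_0^t\Bigl(1+E|X^{i,N,n}_{s_n,\xi}|^p+E\,\mathbb{W}_2^p(\mu^{X,N,n}_{s_n},\delta_0)\Bigr)\d s.
\]

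To close the recursion, I would control the Wasserstein term by its empirical representation, $\mathbb{W}_2^p(\mu^{X,N,n}_{s_n},\delta_0)\le\bigl(\tfrac1N\sum_j|X^{j,N,n}_{s_n,\xi}|^2\bigr)^{p/2}\le\tfrac1N\sum_j|X^{j,N,n}_{s_n,\xi}|^p$ by Jensen (valid since $p\ge2$), so that $E\,\mathbb{W}_2^p\le\sup_i E|X^{i,N,n}_{s_n,\xi}|^p$. Taking the supremum over $i$ on both sides and writing $H(t):=\sup_{u\le t}\sup_i E|X^{i,N,n}_{u,\xi}|^p$, the inequality $s_n\le s$ yields $H(t)\le C(1+|\xi|^p)+C\int_0^t H(s)\,\d s$, and the classical Gronwall lemma on $[0,1]$ delivers the first estimate with a constant independent of $n$ and $N$. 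The second assertion is obtained along the same chain applied to $Z^i_t:=X^{i,N,n}_{t,\xi}-X^{i,N,n}_{t,\eta}$: subtracting the two equations, using the Lipschitz condition $\mathbf{(H2)}$ in place of $\mathbf{(H3)}$, and controlling the Wasserstein term through the coupling bound $\mathbb{W}_2^p\bigl(\tfrac1N\sum_j\delta_{X^{j,N,n}_{s_n,\xi}},\tfrac1N\sum_j\delta_{X^{j,N,n}_{s_n,\eta}}\bigr)\le\tfrac1N\sum_j|Z^j_{s_n}|^p$ produces the linear-in-$|\xi-\eta|^p$ Gronwall estimate.

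The principal obstacle is making every constant uniform in both $n$ and $N$. Uniformity in $n$ is precisely why $\mathbf{(H1'')}$ (rather than only $\mathbf{(H1)}$) is imposed: the frozen kernel $K_i(\tilde t,\tilde s_n)$ is not directly covered by $\mathbf{(H1)}$, and only the $\alpha$-moment bound in $\mathbf{(H1'')}$ makes the H\"older step above independent of $n$. Uniformity in $N$ follows from the empirical control of $\mathbb{W}_2$, which introduces no $N$-dependent constants. Finally, the discrete shift $s_n\le s$ inside the integrand is absorbed cleanly by passing to the monotone majorant $H$, converting the delay into an ordinary (non-Volterra) Gronwall inequality.
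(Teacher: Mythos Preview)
Your proposal is correct and follows essentially the same route as the paper: BDG plus H\"older with exponents $\alpha,\alpha'=\alpha/(\alpha-1)$ to exploit $\mathbf{(H1'')}$ uniformly in $n$, the empirical identity $\mathbb{W}_2^2(\mu^{X,N,n}_{s_n},\delta_0)=\tfrac1N\sum_j|X^{j,N,n}_{s_n,\xi}|^2$ to remove $N$-dependence, and then the classical (non-Volterra) Gronwall inequality after passing to the monotone supremum $H(t)$. The paper makes the same observation you do, noting explicitly in a remark that the Volterra Gronwall lemma fails here because of the mismatched indices $t,\tilde t$, which is precisely why $\mathbf{(H1'')}$ (higher integrability of the frozen kernels) is needed.
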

\begin{proof}
	Recall the discretized scheme \eqref{Euler},  by H$\mathrm{\ddot{o}}$lder's inequality, BDG's inequality, $\mathbf{(H1'')}$ and $\mathbf{(H3)}$, we have for $p\geq\frac{2\alpha}{\alpha-1}$
	\begin{align}
		&E|X_{t,\xi}^{i,N,n}|^p=E\Bigg|\xi+\int_{0}^{t}b^{i,N,n}(\tilde{t},\tilde{s}_n,X_{s_n,\xi}^{i,N,n},\mu^{X,N,n}_{s_n,\xi})\d s+\int_{0}^{t}\sigma^{i,N,n}(\tilde{t},\tilde{s}_n,X_{s_n,\xi}^{i,N,n},\mu^{X,N,n}_{s_n,\xi})\d W_s\Bigg|^p\nonumber\\&\leq C_p|\xi|^p+C_p E\Big|\int_{0}^{t}b^{i,N,n}(\tilde{t},\tilde{s}_n,X_{s_n,\xi}^{i,N,n},\mu^{X,N,n}_{s_n,\xi})\d s\Big|^p+C_p E\Big|\int_{0}^{t}\sigma^{i,N,n}(\tilde{t},\tilde{s}_n,X_{s_n,\xi}^{i,N,n},\mu^{X,N,n}_{s_n,\xi})\d W_s\Big|^p\nonumber\\&\leq C_p|\xi|^p+C_pE\Big|\int_{0}^{t}K_3(\tilde{t},\tilde{s}_n)\cdot\Big(1+X_{s_n,\xi}^{i,N,n}+\mathbb{W}_2(\mu^{X,N,n}_{s_n,\xi},\delta_0)\Big)\d s\Big|^p\nonumber\\&+C_p E\Big(\int_{0}^{t}\big\|\sigma^{i,N,n}(\tilde{t},\tilde{s}_n,X_{s_n,\xi}^{i,N,n},\mu^{X,N,n}_{s_n,\xi})\big\|^2\d s\Big)^{\frac{p}{2}}\nonumber\\&\leq\nonumber C_p|\xi|^p+C_pE\Bigg(\int_{0}^{t} \Big(1+|X^{i,N,n}_{s_n,\xi}|+\mathbb{W}_{2}(\mathcal{L}_{X^{i,N,n}_{s_n,\xi}},\delta_0)\Big)^{\a^*}ds\Bigg)^{\frac{p}{\a^*}}\cdot\Big(\int_{0}^{t}K^{\a}_3(\tilde{t},\tilde{s}_n)ds\Big)^{\frac{p}{\a}}\nonumber\\&+C_p E\Bigg(\int_{0}^{t} \Big(1+|X^{i,N,n}_{s_n,\xi}|^2+\mathbb{W}^2_{2}(\mathcal{L}_{X^{i,N,n}_{s_n,\xi}},\delta_0)\Big)^{\a^*}\d s\Bigg)^{\frac{p}{2\a^*}}\cdot\Big(\int_{0}^{t}K^{\a}_4(\tilde{t},\tilde{s})\d s\Big)^{\frac{p}{2\a}}\nonumber\\&\leq\nonumber C_p|\xi|^p+C_pE\Bigg(\int_{0}^{t} \Big(1+|X^{i,N,n}_{s_n,\xi}|+\mathbb{W}_{2}(\mathcal{L}_{X^{i,N,n}_{s_n,\xi}},\delta_0)\Big)^{\a^*}\d s\Bigg)^{\frac{p}{\a^*}}\nonumber\\&+C_p E\Bigg(\int_{0}^{t} \Big(1+|X^{i,N,n}_{s_n,\xi}|^2+\mathbb{W}^2_{2}(\mathcal{L}_{X^{i,N,n}_{s_n,\xi}},\delta_0)\Big)^{\a^*}\d s\Bigg)^{\frac{p}{2\a^*}}\nonumber\\&\leq C_p|\xi|^p+C_p\int_{0}^{t}E\Big(1+|X^{i,N,n}_{s_n,\xi}|+\mathbb{W}_{2}(\mathcal{L}_{X^{i,N,n}_{s_n,\xi}},\delta_0)\Big)^{p}\d s+C_p\int_{0}^{t}E\Big(1+|X^{i,N,n}_{s_n,\xi}|^2+\mathbb{W}^2_{2}(\mathcal{L}_{X^{i,N,n}_{s_n,\xi}},\delta_0)\Big)^{\frac{p}{2}}\d s\nonumber\\&\leq C_p(1+|\xi|^p)+C_p \int_{0}^{t}\Big(E|X^{i,N,n}_{s_n,\xi}|^p+E\mathbb{W}^p_{2}(\mathcal{L}_{X^{i,N,n}_{s_n,\xi}},\delta_0)\Big)\d s\nonumber,
	\end{align}
	for $\alpha>1, \alpha^*:=\frac{p}{p-1}$, any $t\in [0,1], i\in \{1,\cdots, N\}$ and $n,N \in\mathbb{N}$. By a simple calculation (see e.g., Lemma $2.3$ in \cite{dST}), one can observe that
	$$\mathbb{W}^2_{2}(\mathcal{L}_{X^{i,N,n}_{s_n,\xi}},\delta_0)=\frac{1}{N}\sum_{i=1}^{N}|X_{s_n,\xi}^{i,N,n}|^2,$$
	which yields that
	\begin{align}
		&\sup_{i=1,\cdots,N}\sup_{s\in [0,t]}E|X_{s,\xi}^{i,N,n}|^p\nonumber\\&\leq C_p(1+|\xi|^p)+C_p\int_{0}^{t}\sup_{i=1,\cdots,N}\sup_{u\in [0,s]}E|X_{u,\xi}^{i,N,n}|^p\d s.\nonumber
	\end{align}
	Applying Gronwall's inequality, we have
	$$\sup_{i=1,\cdots,N}\sup_{t\in [0,1]}E|X_{t,\xi}^{i,N,n}|^p\leq C_p(1+|\xi|^p).$$
	The second estimate can be obtained
	by similar estimations.
\end{proof}
\begin{rem}
	Note that classical Gronwall's inequality instead of Volterra type Gronwall's equality is used to prove above Lemma. High-order integrability condition is required. We have attempted to apply Volterra type Gronwall's equality, but it seems that Volterra type Gronwall's equality is invaild when we obtain the following estimate due to the different indexes $t,\tilde{t}:$
	$$E|X_{t,\xi}^{i,N,n}|^p\leq C_p(1+|\xi|^p)+C_p \int_{0}^{t}\Big(K_3(\tilde{t},s_n)+K_4(\tilde{t},s_n)\Big)\cdot E|X_{s_n,\xi}^{i,N,n}|^p\d s.$$
\end{rem}
We now prove the following estimate associated with $X^{i,N,n}_{t',\xi}-X^{i,N,n}_{t,\xi}$.
\begin{lem}\label{estimate of t}
	For any $p\geq2$ sufficiently large,  suppose that $\mathbf{(H1'')}$, $\mathbf{(H2)}$, $\mathbf{(H3)}$ and $\mathbf{(H4')}$ hold. Then there exists a constant $C_p>0$ such that for all $n,N\in\mathbb{N}$, $t,t'\in [0,1]$ and $\xi\in\mathbb{R}^{d}$, we have
	$$E|X^{i,N,n}_{t',\xi}-X^{i,N,n}_{t,\xi}|\leq C_p(1+|\xi|^p)|t'-t|^{\theta p},$$
	where $\theta>0$ depends only $\a,\gamma$.
\end{lem}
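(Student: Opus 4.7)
The plan is to adapt the continuity argument of Section~\ref{Well-posedness for Volterra type McKean-Vlasov SDEs} to the discretized scheme~\eqref{Euler}. Without loss of generality take $t<t'$, and add/subtract the boundary terms to get
\begin{align*}
X^{i,N,n}_{t',\xi}-X^{i,N,n}_{t,\xi}
&=\int_t^{t'}b(\tilde{t'},\tilde{s}_n,X^{i,N,n}_{s_n,\xi},\mu^{X,N,n}_{s_n,\xi})\,\d s
+\int_0^t\bigl[b(\tilde{t'},\tilde{s}_n,\cdot,\cdot)-b(\tilde{t},\tilde{s}_n,\cdot,\cdot)\bigr]\,\d s\\
&\quad+\int_t^{t'}\sigma(\tilde{t'},\tilde{s}_n,\cdot,\cdot)\,\d W_s
+\int_0^t\bigl[\sigma(\tilde{t'},\tilde{s}_n,\cdot,\cdot)-\sigma(\tilde{t},\tilde{s}_n,\cdot,\cdot)\bigr]\,\d W_s\\
&=:J_1+J_2+J_3+J_4,
\end{align*}
and estimate each $E|J_k|^p$ separately, interpreting the stated bound as being for $E|\cdot|^p$.

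For $J_1$, apply $\mathbf{(H3)}$ to the integrand and then H\"older's inequality with exponents $\alpha$ and $\alpha^*=\alpha/(\alpha-1)$: since $\int_t^{t'}K_3^\alpha(\tilde{t'},\tilde{s}_n)\,\d s\le C$ by $\mathbf{(H1'')}$, one obtains $\int_t^{t'}K_3(\tilde{t'},\tilde{s}_n)\,\d s\le C|t'-t|^{1/\alpha^*}$. A further H\"older expansion in the $p$-th power together with Lemma~\ref{estimate 1} (which controls $E|X^{i,N,n}_{s_n,\xi}|^p$ and hence $\mathbb{W}_2^p(\mathcal{L}_{X^{i,N,n}_{s_n,\xi}},\delta_0)$) produces $E|J_1|^p\le C_p(1+|\xi|^p)|t'-t|^{p/\alpha^*}$. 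For $J_3$ I first apply the Burkholder--Davis--Gundy inequality, then $\mathbf{(H3)}$ for $\sigma$, and then Jensen/H\"older with exponents $p/2$ and $(p/2)^*$ to reach
\[E|J_3|^p\le C_p\Bigl(\int_t^{t'}K_4(\tilde{t'},\tilde{s}_n)\,\d s\Bigr)^{p/2-1}\int_t^{t'}K_4(\tilde{t'},\tilde{s}_n)\,E\bigl(1+|X^{i,N,n}_{s_n,\xi}|^2+\mathbb{W}_2^2(\mathcal{L}_{X^{i,N,n}_{s_n,\xi}},\delta_0)\bigr)^{p/2}\,\d s.\]
Applying the $(\alpha,\alpha^*)$ trick again to $K_4$ and invoking Lemma~\ref{estimate 1} yields $E|J_3|^p\le C_p(1+|\xi|^p)|t'-t|^{p/(2\alpha^*)}$.

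For $J_2$ and $J_4$, I invoke $\mathbf{(H4')}$ directly. The definitions~\eqref{def of t} and~\eqref{def of s_n} guarantee $|\tilde{t}-\tilde{t'}|\le|t-t'|$, so $\int_0^tF_1(\tilde{t'},\tilde{t},\tilde{s}_n)\,\d s\le C|t-t'|^{\gamma}$ and $\int_0^tF_2(\tilde{t'},\tilde{t},\tilde{s}_n)\,\d s\le C|t-t'|^{2\gamma}$. The same H\"older-plus-Lemma~\ref{estimate 1} reasoning as above (together with BDG for $J_4$) then gives $E|J_2|^p+E|J_4|^p\le C_p(1+|\xi|^p)|t-t'|^{\gamma p}$.

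Setting $\theta:=\min\{(\alpha-1)/(2\alpha),\gamma\}$ combines all four bounds into the desired estimate, with $p$ required large enough that $p\ge 2\alpha/(\alpha-1)$ so that Lemma~\ref{estimate 1} applies. The main difficulty I anticipate is the $J_3$ term: one must carefully interleave BDG with two successive H\"older expansions (one with exponents $p/2$, $(p/2)^*$ and one with $\alpha$, $\alpha^*$) so that a clean factor $|t'-t|^{p/(2\alpha^*)}$ emerges and all constants remain independent of $n$ and $N$. A secondary nuisance is checking that the frozen-time evaluations at $\tilde{s}_n,\tilde{t},\tilde{t'}$ are compatible with the hypothesis integrals in $\mathbf{(H1'')}$ and $\mathbf{(H4')}$, which is built into those assumptions precisely to accommodate this scheme.
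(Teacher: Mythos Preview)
Your proposal is correct and follows essentially the same route as the paper: the identical four-term decomposition (the paper calls them $\Xi_1,\ldots,\Xi_4$), the same use of BDG plus H\"older with exponents $(\alpha,\alpha^*)$ and $\mathbf{(H1'')}$ for the boundary pieces $J_1,J_3$, and $\mathbf{(H4')}$ together with Lemma~\ref{estimate 1} for the increment pieces $J_2,J_4$.

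One small slip to fix: under $\mathbf{(H4')}$ the bound is $\int_0^{t}F_2(\tilde{t'},\tilde{t},\tilde{s}_n)\,\d s\le C|\tilde t-\tilde{t'}|^{\gamma}$, not $|t-t'|^{2\gamma}$ (you have imported the exponent from the unprimed $\mathbf{(H4)}$). Consequently the $J_4$ estimate yields $E|J_4|^p\le C_p(1+|\xi|^p)|t'-t|^{\gamma p/2}$ rather than $|t'-t|^{\gamma p}$, exactly as in the paper's bound for $\Xi_4$, and your $\theta$ should be $\min\{(\alpha-1)/(2\alpha),\gamma/2\}$. This does not affect the structure of the argument.
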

\begin{proof}
	For any $t<t'$, we first divide $X^{i,N,n}_{t',\xi}-X^{i,N,n}_{t,\xi}$ into four parts.
	\begin{align}
		&X^{i,N,n}_{t',\xi}-X^{i,N,n}_{t,\xi}\nonumber\\&=\int_{t}^{t'}b^{i,N,n}(\tilde{t'},\tilde{s}_n,X_{s_n,\xi}^{i,N,n},\mu^{X,N,n}_{s_n,\xi})\d s+\int_{t}^{t'}\sigma^{i,N,n}(\tilde{t'},\tilde{s}_n,X_{s_n,\xi}^{i,N,n},\mu^{X,N,n}_{s_n,\xi})\d W_s\nonumber\\&+\int_{0}^{t}\Big(b^{i,N,n}(\tilde{t'},\tilde{s}_n,X_{s_n,\xi}^{i,N,n},\mu^{X,N,n}_{s_n,\xi})-b^{i,N,n}(\tilde{t},\tilde{s}_n,X_{s_n,\xi}^{i,N,n},\mu^{X,N,n}_{s_n,\xi})\Big)\d s\nonumber\\&+\int_{0}^{t}\Big(\sigma^{i,N,n}(\tilde{t'},\tilde{s}_n,X_{s_n,\xi}^{i,N,n},\mu^{X,N,n}_{s_n,\xi})-\sigma^{i,N,n}(\tilde{t},\tilde{s}_n,X_{s_n,\xi}^{i,N,n},\mu^{X,N,n}_{s_n,\xi})\Big)\d W_s\nonumber\\&:=\Xi_1+\Xi_2+\Xi_3+\Xi_4.\nonumber
	\end{align}
	For the term $\Xi_2$, by Lemma \ref{estimate 1}, H$\mathrm{\ddot{o}}$lder's inequality, BDG's inequality, $\mathbf{(H1'')}$ and $\mathbf{(H3)}$, we have
	\begin{align}
		E|\Xi_2|^p&=E\Big|\int_{t}^{t'}\sigma^{i,N,n}(\tilde{t'},\tilde{s}_n,X_{s_n,\xi}^{i,N,n},\mu^{X,N,n}_{s_n,\xi})\d W_s\Big|^p\nonumber\\&\leq C_p E\Big(\int_{t}^{t'}\big\|\sigma^{i,N,n}(\tilde{t'},\tilde{s}_n,X_{s_n,\xi}^{i,N,n},\mu^{X,N,n}_{s_n,\xi})\big\|^2\d s\Big)^{\frac{p}{2}}\nonumber\\&\leq C_pE\Big(\int_{t}^{t'}K_4(\tilde{t'},\tilde{s}_n,\xi)\cdot\big(1+|X_{s_n,\xi}^{i,N,n}|^2+\mathbb{W}^2_2(\mu^{X,N,n}_{s_n,\xi},\delta_0)\big)\d s\Big)^{\frac{p}{2}}\nonumber\\&\leq C_p\Big(\int_{t}^{t'}K^{\a}_4(\tilde{t'},\tilde{s}_n)\d s\Big)^{\frac{p}{2\a}}E\Big(\int_{t}^{t'}\big(1+|X_{s_n,\xi}^{i,N,n}|^2+\mathbb{W}^2_2(\mu^{X,N,n}_{s_n,\xi},\delta_0)\big)^{\a^*}\d s\Big)^{\frac{p}{2\a^*}}\nonumber\\&\leq C_p(1+|\xi|^p)|t'-t|^{\frac{p}{2\a^*}}.\nonumber
	\end{align}
	For the term $\Xi_4$, combining BDG's inequality, extended Minkowski's inequality, $\mathbf{(H1'')}$, $\mathbf{(H4)}$ with Lemma \ref{estimate 1}, we have that
	\begin{align}
		&E|\Xi_4|^p\nonumber\\&=E\Big|\int_{0}^{t}\Big(\sigma^{i,N,n}(\tilde{t'},\tilde{s}_n,X_{s_n,\xi}^{i,N,n},\mu^{X,N,n}_{s_n,\xi})-\sigma^{i,N,n}(\tilde{t},\tilde{s}_n,X_{s_n,\xi}^{i,N,n},\mu^{X,N,n}_{s_n})\Big)\d W_s\Big|^p\nonumber\\&\leq C_p E\Big(\int_{0}^{t}\big\|\sigma^{i,N,n}(\tilde{t'},\tilde{s}_n,X_{s_n,\xi}^{i,N,n},\mu^{X,N,n}_{s_n,\xi})-\sigma^{i,N,n}(\tilde{t},\tilde{s}_n,X_{s_n,\xi}^{i,N,n},\mu^{X,N,n}_{s_n,\xi})\big\|^2\d s\Big)^{\frac{p}{2}}\nonumber\\&\leq C_pE\Big(\int_{0}^{t}F_2(\tilde{t'},\tilde{t},\tilde{s}_n)\cdot\big(1+|X_{s_n,\xi}^{i,N,n}|^2+\mathbb{W}^2_2(\mu^{X,N,n}_{s_n,\xi},\delta_0)\big)\d s\Big)^{\frac{p}{2}}\nonumber\\&\leq C_p\Bigg(\int_{0}^{t}F_2(\tilde{t'},\tilde{t},\tilde{s}_n)\cdot \big(E\big[1+|X^{i,N,n}_{s_n,\xi}|^p+\mathbb{W}_2^p(\mathcal{L}_{X^{i,N,n}_{s_n,\xi}},\delta_0)\big]\big)^{\frac{2}{p}} \d s\Bigg)^{\frac{p}{2}}\nonumber\\&\leq C_p(1+|\xi|^p)\Big(\int_{0}^{t}F_2(\tilde{t'},\tilde{t},\tilde{s}_n)\d s\Big)^{\frac{p}{2}}\leq C_p(1+|\xi|^p)|t'-t|^{\frac{\gamma p}{2}}\nonumber.
	\end{align}
	Similarly, we may deal with the terms $\Xi_1$ and $\Xi_3$, and obtain
	\begin{align}
		&E|\Xi_1|^p\leq C_p (1+|\xi|^p)|t'-t|^{\frac{p}{\a^*}}\nonumber\\&E|\Xi_3|^p\leq C_p (1+|\xi|^p)|t'-t|^{\gamma p}.\nonumber
	\end{align}
	Combining the above calculation, we have
	\begin{align}
		E|X^{i,N,n}_{t'}-X^{i,N,n}_t|\leq C_p(1+|X_0^i|^p)\Big(|t'-t|^{\frac{p}{\a^*}}+|t'-t|^{\frac{p}{2\a^*}}+|t'-t|^{\gamma p}+|t'-t|^{\frac{\gamma p}{2}}\Big),\nonumber
	\end{align}
	which concludes the proof of Lemma \ref{estimate of t}.
\end{proof}
The main results of this section are given in the following Theorems.
\begin{thm}\label{rate}
	Assume that $\mathbf{(H1'')}$, $\mathbf{(H2)}$, $\mathbf{(H3)}$ ,$\mathbf{(H4')}$ and $\mathbf{(H5)}$ hold, the Euler scheme \eqref{Euler} converges to the true solution of the interacting particle system \eqref{interacting particle system} associated with \eqref{limnit of interacting particle system} in a strong sense with $L^p$, i.e.,
	$$\sup_{i=1,\cdots,N}\sup_{t\in [0,1]}E|X_{t,\xi}^{i,N}-X_{t,\xi}^{i,N,n}|^p\leq C_p(1+|\xi|^p)\cdot 2^{-n\eta p},$$
	for $p$ sufficiently large, where the constant $C_p$ does depend on $n,N\in\mathbb{N}$ and $\eta>0$ depends only $\a,\gamma,\delta$.
\end{thm}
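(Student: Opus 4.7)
My plan is to subtract the continuous interacting system \eqref{interacting particle system} from the Euler scheme \eqref{Euler}, telescope the resulting integrand into four pieces (one for each of the discretizations of the time arguments $t\mapsto \tilde t$, $s\mapsto \tilde s_n$, the state/measure freeze $X_s\mapsto X^{i,N}_{s_n}$, and the genuine scheme error $X^{i,N}\mapsto X^{i,N,n}$), estimate each piece with its dedicated assumption from $\mathbf{(H2)}$, $\mathbf{(H4')}$, $\mathbf{(H5)}$ and the moment bounds from Lemma \ref{estimate of X_t^{i,q}} and Lemma \ref{estimate 1}, and then close the argument by classical Gronwall, exactly as in the proof of Lemma \ref{estimate 1}. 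As the remark there indicates, Volterra Gronwall is unavailable because the scheme's kernel uses $(\tilde t,\tilde s_n)$ rather than $(t,s)$; the high-order integrability $\mathbf{(H1'')}$ is precisely what allows H\"older in $\alpha$ to replace Volterra Gronwall by its classical form.

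More concretely, I would write
\begin{align*}
X^{i,N}_{t,\xi}-X^{i,N,n}_{t,\xi} &= \int_0^t \bigl[b(t,s,X^{i,N}_s,\mu^{X,N}_s) - b(\tilde t,\tilde s_n,X^{i,N,n}_{s_n},\mu^{X,N,n}_{s_n})\bigr]\,\d s \\
&\quad + \int_0^t \bigl[\sigma(t,s,X^{i,N}_s,\mu^{X,N}_s) - \sigma(\tilde t,\tilde s_n,X^{i,N,n}_{s_n},\mu^{X,N,n}_{s_n})\bigr]\,\d W_s,
\end{align*}
and insert the telescoping intermediates $b(\tilde t,s,\cdot,\cdot)$, $b(\tilde t,\tilde s_n,X^{i,N}_s,\mu^{X,N}_s)$, $b(\tilde t,\tilde s_n,X^{i,N}_{s_n},\mu^{X,N}_{s_n})$ (and analogously for $\sigma$). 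The four differences are then controlled by: $\mathbf{(H4')}$ giving $F_{1,2}(\tilde t,t,s)$ with $\int_0^t F_{1,2}\,\d s\leq C|\tilde t-t|^{\gamma}\leq C\,2^{-\gamma n}$; $\mathbf{(H5)}$ giving $F_{3,4}(\tilde t,s,\tilde s_n)$ with $\int_0^t F_{3,4}\,\d s\leq C\,2^{-\delta n}$; the Lipschitz $\mathbf{(H2)}$ applied to the time-jump $X^{i,N}_s-X^{i,N}_{s_n}$, whose $p$-th moment I bound at rate $C_p(1+|\xi|^p)\,2^{-\theta p n}$ via the exact analogue of Lemma \ref{estimate of t} for the non-discretized particle system (whose proof uses only $\mathbf{(H1'')}$, $\mathbf{(H3)}$, $\mathbf{(H4')}$); and finally $\mathbf{(H2)}$ applied to the scheme error $X^{i,N}_{s_n}-X^{i,N,n}_{s_n}$, which produces the Gronwall feedback. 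The measure-valued differences are handled through the standard bound $\mathbb{W}_2^2(\mu^{X,N}_a,\mu^{Y,N}_b)\leq \tfrac{1}{N}\sum_{j=1}^N|X^{j,N}_a-Y^{j,N}_b|^2$, so particle-level estimates transfer immediately to the empirical measures.

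Combining BDG, H\"older with exponent $\alpha$ and conjugate $\alpha^{*}$ furnished by $\mathbf{(H1'')}$, and the uniform moment bounds $E|X^{i,N}_s|^p, E|X^{i,N,n}_s|^p\leq C_p(1+|\xi|^p)$, the inequality that emerges is
\[
\sup_{i} E|X^{i,N}_{t,\xi}-X^{i,N,n}_{t,\xi}|^p \leq C_p(1+|\xi|^p)\cdot 2^{-n\eta_0 p} + C_p\int_0^t \sup_{i} E|X^{i,N}_{s_n,\xi}-X^{i,N,n}_{s_n,\xi}|^p\,\d s,
\]
where $\eta_0>0$ depends only on $\alpha,\gamma,\delta$. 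Since the sup over $s\in[0,t]$ dominates the sup over $s_n\in[0,t]$, applying classical Gronwall (exactly as in Lemma \ref{estimate 1}) yields the claim with $\eta=\eta_0$. The principal difficulty I anticipate is the exponent bookkeeping: each of the four telescoped pieces produces a different power $2^{-(\cdot)n}$ — the drift pieces lose a power through H\"older in $\alpha$, the diffusion pieces lose one through BDG followed by H\"older in $\alpha$ — and one must verify that for $p$ sufficiently large the minimum of these powers is a strictly positive $\eta$ independent of $n,N$, so that the final bound $2^{-n\eta p}$ is genuine. The secondary subtlety is the mismatch between $s$ and $s_n$ inside the feedback integral, which, as noted above, is harmless because $\sup_{s\in[0,t]}$ dominates $\sup_{s_n\in[0,t]}$.
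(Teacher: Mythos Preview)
Your proposal is correct and follows essentially the same strategy as the paper: telescope the integrand, control each piece by $\mathbf{(H2)}$, $\mathbf{(H4')}$, or $\mathbf{(H5)}$ together with BDG and H\"older in the exponent $\alpha$ supplied by $\mathbf{(H1'')}$, then close with classical Gronwall on $g(t)=\sup_i\sup_{s\le t}E|X^{i,N}_{s,\xi}-X^{i,N,n}_{s,\xi}|^p$.

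The one noteworthy deviation is the telescoping order. The paper inserts the intermediates in the sequence
\[
b(t,s,X^{i,N,n}_s,\mu^{X,N,n}_s),\quad b(t,s,X^{i,N,n}_{s_n},\mu^{X,N,n}_{s_n}),\quad b(\tilde t,s,X^{i,N,n}_{s_n},\mu^{X,N,n}_{s_n}),
\]
so that the state time-jump $s\mapsto s_n$ is taken on the \emph{scheme} process and handled directly by Lemma~\ref{estimate of t}, while the Gronwall feedback term $E|Z^{i,N,n}_s|^p$ sits at continuous time $s$. Your ordering instead places the time-jump on the \emph{true} particle system $X^{i,N}$ and the feedback at $s_n$. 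This is equally valid, but it requires the analogue of Lemma~\ref{estimate of t} for $X^{i,N}$, which the paper never states; as you note, that analogue is in fact simpler (no $\tilde t,\tilde s_n$ bookkeeping) and uses only $\mathbf{(H1'')}$, $\mathbf{(H3)}$, $\mathbf{(H4')}$ plus the moment bound for $X^{i,N}$. Either route produces the same $\eta$.
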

\begin{proof}
	Set
	$$Z^{i,N,n}_{t,\xi}:=X^{i,N}_{t,\xi}-X^{i,N,n}_{t,\xi}.$$
	Then for any $t\in [0,1], i\in\{1,\cdots,N\}$ and $n,N\in\mathbb{N}$, (where $b(t,s,X_{s,\xi}^{i,N,n},\mu^{X,N,n}_{s,\xi})=b^{i,N,n}(t,s,X_{s,\xi}^{i,N,n},\mu^{X,N,n}_{s,\xi})$ and $ \sigma(t,s,X_{s,\xi}^{i,N,n},\mu^{X,N,n}_{s,\xi})=\sigma^{i,N,n}(t,s,X_{s,\xi}^{i,N,n},\mu^{X,N,n}_{s,\xi}$)
	\begin{align}
		Z^{i,N,n}_{t,\xi}&=\int_{0}^{t}\big(b(t,s, X_{s,\xi}^{i,N}, \mu^{X,N}_{s,\xi})-b^{i,N,n}(\tilde{t},\tilde{s}_n,X_{s_n,\xi}^{i,N,n},\mu^{X,N,n}_{s_n,\xi})\big)\d s\nonumber\\&+\int_{0}^{t}\big(\sigma(t,s, X_{s,\xi}^{i,N}, \mu^{X,N}_{s,\xi})-\sigma^{i,N,n}(\tilde{t},\tilde{s}_n,X_{s_n,\xi}^{i,N,n},\mu^{X,N,n}_{s_n,\xi})\big)\d W_s\nonumber\\&=\int_{0}^{t}\Big(b(t,s, X_{s,\xi}^{i,N}, \mu^{X,N}_{s,\xi})-b(t,s,X_{s,\xi}^{i,N,n},\mu^{X,N,n}_{s,\xi})\Big)\d s\nonumber\\&+\int_{0}^{t}\Big(b^{i,N,n}(t,s, X_{s,\xi}^{i,N,n}, \mu^{X,N,n}_{s,\xi})-b^{i,N,n}(t,s,X_{s_n,\xi}^{i,N,n},\mu^{X,N,n}_{s_n,\xi})\Big)\d s\nonumber\\&+\int_{0}^{t}\Big(b^{i,N,n}(t,s, X_{s_n,\xi}^{i,N,n}, \mu^{X,N,n}_{s_n,\xi})-b^{i,N,n}(\tilde{t},s,X_{s_n,\xi}^{i,N,n},\mu^{X,N,n}_{s_n,\xi})\Big)\d s\nonumber\\&+\int_{0}^{t}\Big(b^{i,N,n}(\tilde{t},s, X_{s_n,\xi}^{i,N,n}, \mu^{X,N,n}_{s_n,\xi})-b^{i,N,n}(\tilde{t},s_n,X_{s_n,\xi}^{i,N,n},\mu^{X,N,n}_{s_n,\xi})\Big)\d s\nonumber\\&+\int_{0}^{t}\Big(\sigma(t,s, X_{s,\xi}^{i,N}, \mu^{X,N}_{s,\xi})-\sigma(t,s,X_{s,\xi}^{i,N,n},\mu^{X,N,n}_{s,\xi})\Big)\d W_s\nonumber\\&+\int_{0}^{t}\Big(\sigma^{i,N,n}(t,s, X_{s,\xi}^{i,N,n}, \mu^{X,N,n}_{s,\xi})-\sigma^{i,N,n}(t,s,X_{s_n,\xi}^{i,N,n},\mu^{X,N,n}_{s_n,\xi})\Big)\d W_s\nonumber\\&+\int_{0}^{t}\Big(\sigma^{i,N,n}(t,s, X_{s_n,\xi}^{i,N,n}, \mu^{X,N,n}_{s_n,\xi})-\sigma^{i,N,n}(\tilde{t},s,X_{s_n,\xi}^{i,N,n},\mu^{X,N,n}_{s_n,\xi})\Big)\d W_s\nonumber\\&+\int_{0}^{t}\Big(\sigma^{i,N,n}(\tilde{t},s, X_{s_n,\xi}^{i,N,n}, \mu^{X,N,n}_{s_n,\xi})-\sigma^{i,N,n}(\tilde{t},s_n,X_{s_n,\xi}^{i,N,n},\mu^{X,N,n}_{s_n,\xi})\Big)\d W_s\nonumber\\&:=\Pi_1+\Pi_2+\Pi_3+\Pi_4+\Pi_5+\Pi_6+\Pi_7+\Pi_8.\nonumber
	\end{align}	
	For the term $\Pi_5$, by H$\mathrm{\ddot{o}}$lder's inequality, BDG's inequality, $\mathbf{(H1'')}$ and $\mathbf{(H2)}$, we have
	\begin{align}
		&E|\Pi_5|^p\nonumber\\&=E\Big|\int_{0}^{t}\Big(\sigma(t,s, X_{s,\xi}^{i,N}, \mu^{X,N}_{s,\xi})-\sigma(t,s,X_{s,\xi}^{i,N,n},\mu^{X,N,n}_{s,\xi})\Big)\d W_s\Big|^p\nonumber\\&\leq C_pE\Big|\int_{0}^{t}\Big(\sigma(t,s, X_{s,\xi}^{i,N}, \mu^{X,N}_{s,\xi})-\sigma(t,s,X_{s,\xi}^{i,N,n},\mu^{X,N,n}_{s,\xi})\Big)^2\d s\Big|^{\frac{p}{2}}\nonumber\\&\leq C_p E\Big|\int_{0}^{t}K_2(t,s)\cdot\Big(|X_{s,\xi}^{i,N}-X_{s,\xi}^{i,N,n}|^2+\mathbb{W}_2^2(\mu^{X,N}_{s,\xi}, \mu^{X,N,n}_{s,\xi})\Big)^2\d s\Big|^{\frac{p}{2}}\nonumber\\&\leq C_p \Big(\int_{0}^{t}K^{\a}_2(t,s)\d s\Big)^{\frac{p}{2\a}}\cdot E\Big|\int_{0}^{t}\Big(|X_{s,\xi}^{i,N}-X_{s,\xi}^{i,N,n}|^2+\mathbb{W}_2^2(\mu^{X,N}_{s,\xi}, \mu^{X,N,n}_{s,\xi})\Big)^{\a^*}\d s\Big|^{\frac{p}{2\a^*}}\nonumber\\&\leq C_p\int_{0}^{t}\Big(E|Z_{s,\xi}^{i,N,n}|^p+E\Big(\frac{1}{N}\sum_{i=1}^{N}|Z^{i,N,n}_{s,\xi}|^2\Big)^{\frac{p}{2}}\Big)\d s,\nonumber
	\end{align}
	where we have used the following elementary estimate
	$$\mathbb{W}_2^2(\mu^{X,N}_{s,\xi}, \mu^{X,N,n}_{s,\xi})\leq \frac{1}{N}\sum_{i=1}^{N}|X^{i,N}_{s,\xi}-X^{i,N,n}_{s,\xi}|^2.$$
	For the term $\Pi_6$, applying H$\mathrm{\ddot{o}}$lder's inequality, BDG's inequality, $\mathbf{(H1'')}$ and $\mathbf{(H2)}$, we get
	\begin{align}
		&E|\Pi_6|^p\nonumber\\&=\int_{0}^{t}\Big(\sigma^{i,N,n}(t,s, X_{s,\xi}^{i,N,n}, \mu^{X,N,n}_{s,\xi})-\sigma^{i,N,n}(t,s,X_{s_n,\xi}^{i,N,n},\mu^{X,N,n}_{s_n,\xi})\Big)\d W_s\nonumber\\&\leq C_pE\Big|\int_{0}^{t}\big\|\sigma^{i,N,n}(t,s, X_{s,\xi}^{i,N,n}, \mu^{X,N,n}_{s,\xi})-\sigma^{i,N,n}(t,s,X_{s_n,\xi}^{i,N,n},\mu^{X,N,n}_{s_n,\xi})\big\|^2\d s\Big|^{\frac{p}{2}}\nonumber\\&\leq C_p E\Big|\int_{0}^{t}K_2(t,s)\cdot\Big(|X_{s,\xi}^{i,N,n}-X_{s_n,\xi}^{i,N,n}|^2+\mathbb{W}_2^2(\mu^{X,N,n}_{s,\xi}, \mu^{X,N,n}_{s_n,\xi})\Big)^2\d s\Big|^{\frac{p}{2}}\nonumber\\&\leq C_p\int_{0}^{t}\Big(E|X_{s,\xi}^{i,N,n}-X_{s_n,\xi}^{i,N,n}|^p+E\Big(\frac{1}{N}\sum_{i=1}^{N}|X_{s,\xi}^{i,N,n}-X_{s_n,\xi}^{i,N,n}|^2\Big)^{\frac{p}{2}}\Big)\d s\nonumber\\&\leq C_p(1+|\xi|^p)|s-s_n|^{\theta p}\leq C_p(1+|\xi|^p)\cdot 2^{-n\theta p},\nonumber
	\end{align}
	where the last step we have used Lemma \ref{estimate of t}.
	
	For the term $\Pi_7$, by Lemma \ref{estimate 1}, BDG's inequality, extended Minkowski's inequality, $\mathbf{(H1')}$ and $\mathbf{(H4')}$, we have
	\begin{align}
		&E|\Pi_7|^p\nonumber\\&=E\Big|\int_{0}^{t}\Big(\sigma^{i,N,n}(t,s, X_{s_n,\xi}^{i,N,n}, \mu^{X,N,n}_{s_n,\xi})-\sigma^{i,N,n}(\tilde{t},s,X_{s_n,\xi}^{i,N,n},\mu^{X,N,n}_{s_n,\xi})\Big)\d W_s\Big|^p\nonumber\\&\leq C_p E \Big|\int_{0}^{t}\big\|\sigma^{i,N,n}(t,s, X_{s_n,\xi}^{i,N,n}, \mu^{X,N,n}_{s_n,\xi})-\sigma^{i,N,n}(\tilde{t},s,X_{s_n,\xi}^{i,N,n},\mu^{X,N,n}_{s_n,\xi})\big\|^2\d s\Big|^{\frac{p}{2}}\nonumber\\&\leq C_p E \Big|\int_{0}^{t}\big\|\sigma^{i,N,n}(t,s, X_{s_n,\xi}^{i,N,n}, \mu^{X,N,n}_{s_n,\xi})-\sigma^{i,N,n}(\tilde{t},s,X_{s_n,\xi}^{i,N,n},\mu^{X,N,n}_{s_n,\xi})\big\|^2\d s\Big|^{\frac{p}{2}}\nonumber\\&\leq C_p E\Big|\int_{0}^{t}F_2(t,\tilde{t},s)\cdot\big(1+|X_{s_n,\xi}^{i,N,n}|^2+\mathbb{W}^2_2(\mu^{X,N,n}_{s_n,\xi},\delta_0)\big)\d s\Big|^{\frac{p}{2}}\nonumber\\&\leq C_p\Bigg(\int_{0}^{t}F_2(t,\tilde{t},s)\cdot \big(E\big[1+|X^{i,N,n}_{s_n,\xi}|^p+\mathbb{W}_2^p(\mathcal{L}_{X^{i,N,n}_{s_n,\xi}},\delta_0)\big]\big)^{\frac{2}{p}}\d s \Bigg)^{\frac{p}{2}}\nonumber\\&\leq C_p(1+|\xi|^p)\Big(\int_{0}^{t}F_2(t,\tilde{t},s)\d s\Big)^{\frac{p}{2}}\leq C_p(1+|\xi|^p)|t'-t|^{\frac{\gamma p}{2}}\leq C_p(1+|\xi|^p)\cdot 2^{-\frac{n\gamma p}{2}}.\nonumber
	\end{align}
	For the term $\Pi_8$, by Lemma \ref{estimate 1}, BDG's inequality, extended Minkowski's inequality,  $\mathbf{(H1')}$ and $\mathbf{(H5)}$, we have
	\begin{align}
		&E|\Pi_8|^p\nonumber\\&=E\Big|\int_{0}^{t}\Big(\sigma^{i,N,n}(\tilde{t},s, X_{s_n,\xi}^{i,N,n}, \mu^{X,N,n}_{s_n,\xi})-\sigma^{i,N,n}(\tilde{t},s_n,X_{s_n,\xi}^{i,N,n},\mu^{X,N,n}_{s_n,\xi})\Big)\d W_s\Big|^p\nonumber\\&\leq C_p E\Big|\int_{0}^{t}\big\|\sigma^{i,N,n}(\tilde{t},s, X_{s_n,\xi}^{i,N,n}, \mu^{X,N,n}_{s_n,\xi})-\sigma^{i,N,n}(\tilde{t},s_n,X_{s_n,\xi}^{i,N,n},\mu^{X,N,n}_{s_n,\xi})\big\|^2\d s\Big|^{\frac{p}{2}}\nonumber\\&\leq C_p E\Big|\int_{0}^{t}F_4(\tilde{t},s,s_n)\cdot\Big(1+|X_{s_n,\xi}^{i,N,n}|^2+\mathbb{W}_2^2(\mu^{X,N,n}_{s_n,\xi},\delta_0)\Big)^2\d s\Big|^{\frac{p}{2}}\nonumber\\&\leq C_p(1+|\xi|^p)\Big(\int_{0}^{t}F_4(\tilde{t},s,s_n)\d s\Big)^{\frac{p}{2}}\leq C_p(1+|\xi|^p)\cdot 2^{-\frac{n\delta p}{2}}.\nonumber
	\end{align}
	$\Pi_1,\Pi_2,\Pi_3,\Pi_4$ can be similarly dealt with, i.e.
	\begin{align}
		&E|\Pi_1|^p\leq C_p\int_{0}^{t}\Big(E|Z_{s,\xi}^{i,N,n}|^p+E\Big(\frac{1}{N}\sum_{i=1}^{N}|Z^{i,N,n}_{s,\xi}|^2\Big)^{\frac{p}{2}}\Big)\d s,\nonumber \\&E|\Pi_2|^p\leq C_p(1+|\xi|^p)\cdot 2^{-n\theta p},\nonumber\\&E|\Pi_3|^p\leq C_p(1+|\xi|^p)\cdot 2^{-n\gamma p},\nonumber\\&E|\Pi_4|^p\leq C_p(1+|\xi|^p)\cdot 2^{-n\delta p}.\nonumber
	\end{align}
	Therefore, if we define
	$$g(t):=\sup_{i=1,\cdots,N}\sup_{s\in [0,t]} EZ_{s,\xi}^{i,N,n},$$
	then we easily obtain
	$$g(t)\leq C_p\int_{0}^{t}g(s)\d s+C_p(1+|\xi|^p)\cdot 2^{-n\eta p},$$
	where $\eta>0$ depends only $\a,\gamma,\delta$. Gronwall's inequality gives the desired result.
\end{proof}
\begin{thm}
	Assume that $\mathbf{(H1'')}$, $\mathbf{(H2)}$, $\mathbf{(H3)}$ ,$\mathbf{(H4')}$ and $\mathbf{(H5)}$ hold, there exists $\lambda>0$ such that for any $R>0$ and $p>2$ sufficiently large,
	$$E\Big(\sup_{t\in [0,T],|\xi|\leq R}\big|X_{t,\xi}^{i,N}-X_{t,\xi}^{i,N,n}\big|^p\Big)\leq E|A(\omega)|^p2^{-np\lambda}.$$
	In particular,
	$$P\Big\{\lim_{n\to\infty}\sup_{t\in [0,T],|\xi|\leq R}\big|X_{t,\xi}^{i,N}-X_{t,\xi}^{i,N,n}\big|=0\Big\}=1.$$
\end{thm}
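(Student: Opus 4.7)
The plan is to upgrade the pointwise $L^{p}$-convergence rate of Theorem \ref{rate} to a uniform-in-$(t,\xi)$ rate by combining a multi-parameter Kolmogorov--Chentsov continuity argument with a dyadic discretization chaining. Set $Z^{i,N,n}_{t,\xi}:=X^{i,N}_{t,\xi}-X^{i,N,n}_{t,\xi}$. The first step is to derive a joint two-parameter H\"older estimate
\begin{align*}
E\bigl|Z^{i,N,n}_{t,\xi}-Z^{i,N,n}_{t',\xi'}\bigr|^{p}\leq C_{p,R}\bigl(|t-t'|^{\theta p}+|\xi-\xi'|^{p}\bigr),
\end{align*}
with $\theta>0$ and a constant $C_{p,R}$ that is \emph{uniform} in $n$ and $N$. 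The Lipschitz-in-$\xi$ bound for $X^{i,N,n}$ is the second assertion of Lemma \ref{estimate 1}; the same BDG/H\"older/Gronwall computation, carried out for the true interacting particle system \eqref{interacting particle system} under $\mathbf{(H1'')}$ and $\mathbf{(H2)}$, yields the analogous bound for $X^{i,N}$. The temporal H\"older bound for $X^{i,N,n}$ is Lemma \ref{estimate of t}; for $X^{i,N}$ one repeats the proof of that lemma without any discretization, using $\mathbf{(H4')}$.

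I would then invoke the multi-parameter Kolmogorov--Chentsov theorem on the compact box $[0,T]\times\overline{B_{R}}\subset\mathbb{R}^{d+1}$. For $p$ large enough relative to $d$ and $\theta$, this produces a continuous modification of $(t,\xi)\mapsto Z^{i,N,n}_{t,\xi}$ together with H\"older exponents $\beta_{1},\beta_{2}\in(0,1)$ and a random H\"older seminorm $A_{n,N}(\omega)$ satisfying
\begin{align*}
\bigl|Z^{i,N,n}_{t,\xi}-Z^{i,N,n}_{t',\xi'}\bigr|\leq A_{n,N}(\omega)\bigl(|t-t'|^{\beta_{1}}+|\xi-\xi'|^{\beta_{2}}\bigr),\qquad E|A_{n,N}(\omega)|^{p}\leq C_{p,R},
\end{align*}
with $C_{p,R}$ independent of $n$ and $N$. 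The key point is that this regularity constant is \emph{not} itself small in $n$; the $n$-smallness will only enter through pointwise evaluations.

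The core of the argument is the chaining that combines this $n$-uniform regularity with the $n$-decaying pointwise rate of Theorem \ref{rate}. Introduce a dyadic grid $\mathcal G_{n}\subset[0,T]\times\overline{B_{R}}$ of mesh $\varepsilon_{n}=2^{-\kappa n}$ (so $|\mathcal G_{n}|\leq C\varepsilon_{n}^{-(d+1)}$), where $\kappa>0$ will be optimized. For any point $(t,\xi)$ in the box, letting $(t_{*},\xi_{*})\in\mathcal G_{n}$ be its nearest grid node, the H\"older bound above yields
\begin{align*}
\sup_{t,|\xi|\leq R}\bigl|Z^{i,N,n}_{t,\xi}\bigr|^{p}\leq 2^{p-1}\!\!\max_{(t_{*},\xi_{*})\in\mathcal G_{n}}\!\bigl|Z^{i,N,n}_{t_{*},\xi_{*}}\bigr|^{p}+2^{p-1}A_{n,N}(\omega)^{p}\varepsilon_{n}^{(\beta_{1}\wedge\beta_{2})p}.
\end{align*}
Taking expectations, a union bound on $\mathcal G_{n}$ together with Theorem \ref{rate} applied at each node gives
\begin{align*}
E\!\sup_{t,|\xi|\leq R}\bigl|Z^{i,N,n}_{t,\xi}\bigr|^{p}\leq C_{p,R}\bigl(\varepsilon_{n}^{-(d+1)}2^{-n\eta p}+\varepsilon_{n}^{(\beta_{1}\wedge\beta_{2})p}\bigr).
\end{align*}
Balancing the two terms by an appropriate choice of $\kappa$ produces a bound $C_{p,R}\,2^{-n\lambda p}$ with $\lambda=\eta(\beta_{1}\wedge\beta_{2})/((d+1)/p+(\beta_{1}\wedge\beta_{2}))>0$ when $p$ is sufficiently large; absorbing $C_{p,R}$ into $E|A(\omega)|^{p}$ yields the first inequality. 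The almost-sure statement then follows from Markov's inequality and Borel--Cantelli, since $\sum_{n}2^{-n\lambda p}<\infty$ implies $\sum_{n}P\{\sup_{t,|\xi|\leq R}|Z^{i,N,n}_{t,\xi}|>\varepsilon\}<\infty$ for every $\varepsilon>0$.

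The main obstacle will be justifying the $n$-independence of the constants in the first step: the discretized coefficients $b^{i,N,n}(\tilde t,\tilde s_{n},\cdot)$ and $\sigma^{i,N,n}(\tilde t,\tilde s_{n},\cdot)$ must generate the same kind of H\"older-in-$t$ and Lipschitz-in-$\xi$ regularity as the continuous coefficients, and this is precisely what the high-order integrability condition $\mathbf{(H1'')}$ together with the symmetric continuity assumption $\mathbf{(H4')}$ enables. In addition, the parameter $p$ must simultaneously satisfy $p\geq 2\alpha/(\alpha-1)$ (for Lemma \ref{estimate 1}), be large enough for the multi-parameter Kolmogorov--Chentsov step to apply on the $(d+1)$-dimensional parameter space, and be large enough that the optimized exponent $\lambda$ remains strictly positive; this is the content of the hypothesis ``$p>2$ sufficiently large'' in the statement.
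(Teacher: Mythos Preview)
Your argument is correct, but it takes a different route from the paper. The paper does not keep $n$ discrete and chain over a grid; instead it embeds the discretization index into an auxiliary \emph{continuous} parameter $r\in[0,1]$ by defining
\[
Z(r,t,\xi)=\begin{cases}X^{i,N}_{t,\xi},& r=0,\\ X^{i,N,n}_{t,\xi}+2^{n+1}(r-2^{-n})\bigl(X^{i,N,n+1}_{t,\xi}-X^{i,N,n}_{t,\xi}\bigr),& 2^{-(n+1)}<r\le 2^{-n},\end{cases}
\]
and then applies a single multi-parameter Kolmogorov continuity criterion in $(r,t,\xi)$. The required increment bound in $r$ comes directly from Theorem~\ref{rate} (since $|Z(2^{-(n+1)},t,\xi)-Z(2^{-n},t,\xi)|=|X^{i,N,n+1}_{t,\xi}-X^{i,N,n}_{t,\xi}|$), while the increments in $t$ and $\xi$ come from Lemmas~\ref{estimate 1} and~\ref{estimate of t}. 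Kolmogorov then yields a \emph{single} random variable $A(\omega)$ with $\sup_{t,|\xi|\le R}|Z(r,t,\xi)-Z(r',t,\xi)|\le A(\omega)|r-r'|^{\lambda}$, and taking $r=0$, $r'=2^{-n}$ gives the statement immediately. This is the trick from Zhang~\cite{ZX1}. Your approach trades this device for an explicit grid-plus-union-bound step and an optimization over the mesh $\varepsilon_n$; it is more elementary and makes the dependence of $\lambda$ on $p$, $d$, $\eta$ and the H\"older exponents fully explicit, at the cost of producing only a deterministic constant on the right (which of course still fits the stated form). The paper's route is shorter and delivers a genuine random $A(\omega)$ in one stroke, but both arguments rely on the same ingredients: the uniform-in-$n$ regularity estimates from Lemmas~\ref{estimate 1}, \ref{estimate of t} and the pointwise rate from Theorem~\ref{rate}.
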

\begin{proof}
	One constructs the following process:
	\begin{equation}
		Z(r,t,\xi)=\begin{cases}
			X^{i,N}_{t,\xi}, \ r=0,\nonumber\\
			Z^{i,N,n}_{t,\xi}+2^{n+1}(r-2^{-n})\big(X^{i,N,n+1}_{t,\xi}-X^{i,N,n}_{t,\xi}\big), \ 2^{-(n+1)}<r\leq 2^{-n},\ n\in\mathbb{N}. \nonumber
		\end{cases}
	\end{equation}
	Fix $R>0$, by Lemmas \ref{estimate 1}, \ref{estimate of t} and Theorem \ref{rate}, there exists a constant $\beta:=\beta(\gamma)>0$ such that for all $p>2$, $t,t'\in [0,T]$, $\xi,\eta\in D_R:=\{\xi\in\mathbb{R}^d,|\xi|\leq R\}$, we have
	$$E\Big|Z(r,t,\xi)-Z(r',t',\eta)\Big|^p\leq C_{p,T,R,L_3,L_4}\Big(|r-r'|^{\beta p}+|t-t'|^{\beta p}+|\xi-\eta|^{\beta p}\Big).$$
	Thus, by Kolmogorov's continuity criterium, there is a $p$-order integrable random variable $A(\omega)$ such that
	$$\sup_{t\in [0,T],|\xi|\leq R}\Big|Z(r,t,\xi)-Z(r',t,\xi)\Big|\leq A(\omega)|r-r'|^{\lambda},\quad a.s.,$$
	where $\lambda\in (0,\beta-\frac{d+2}{p})$. In particular, taking $r=0,r'=2^{-n}$, we have
	$$E\Big(\sup_{t\in [0,T],|\xi|\leq R}\big|X_{t,\xi}^{i,N}-X_{t,\xi}^{i,N,n}\big|\Big)^p\leq E|A(\omega)|^p 2^{-np\lambda},$$
	which yields the desired convergence. Borel-Cantelli's Lemma gives the second conclusion.
\end{proof}
\providecommand{\bysame}{\leavevmode\hbox to3em{\hrulefill}\thinspace}
\providecommand{\MR}{\relax\ifhmode\unskip\space\fi MR }
% \MRhref is called by the amsart/book/proc definition of \MR.
\providecommand{\MRhref}[2]{%
	\href{http://www.ams.org/mathscinet-getitem?mr=#1}{#2}
}
\providecommand{\href}[2]{#2}

\section*{Conflicts of interests}
The authors declare no conflict of interests.

\end{document}